\pgfplotsset{compat=1.18}     % For fonts like \mathbb
\newcommand{\R}{\mathbb{R}}
\newcommand{\Qbb}{\mathbb{Q}}
\newcommand{\Prob}{\mathbb{P}}
\newcommand{\E}{\mathbb{E}}
\newcommand{\Pcal}{\mathcal{P}}
\newcommand{\Fcal}{\mathcal{F}}
\newcommand{\cEneutheta}{\mathcal{E}^{\theta}}
\newcommand{\dd}{\mathrm{d}}
\newcommand{\norm}[1]{\left\lVert#1\right\rVert}
\newcommand{\abs}[1]{\left\lvert#1\right\rvert}
\newcommand{\scpr}[2]{\left\langle #1, #2 \right\rangle}
\newcommand{\Lcaltheta}{\mathcal{L}^{\theta}}
\theoremstyle{plain}
\newtheorem{theorem}{Theorem}[section]
\newtheorem{proposition}[theorem]{Proposition}
\theoremstyle{definition}
\newtheorem{definition}[theorem]{Definition}
\newtheorem{assumption}[theorem]{Assumption}
\newtheorem{axiom}[theorem]{Axiom}
\newtheorem{remark}[theorem]{Remark}
\begin{document}

\begin{frontmatter}

\title{Neural Brownian Motion}
\runtitle{Neural Brownian Motion}
\author{Qian Qi}
\thanks{Corresponding author. Peking University, Beijing 100871, China. Email: \href{mailto:qiqian@pku.edu.cn}{qiqian@pku.edu.cn}}

\begin{abstract}
This paper introduces the Neural-Brownian Motion (NBM), a new class of stochastic processes for modeling dynamics under learned uncertainty. The NBM is defined axiomatically by replacing the classical martingale property with respect to linear expectation with one relative to a non-linear Neural Expectation Operator, $\cEneutheta$, generated by a Backward Stochastic Differential Equation (BSDE) whose driver $f_\theta$ is parameterized by a neural network. Our main result is a representation theorem for a canonical NBM, which we define as a continuous $\cEneutheta$-martingale with zero drift under the physical measure. We prove that, under a key structural assumption on the driver, such a canonical NBM exists and is the unique strong solution to a stochastic differential equation of the form $\dd M_t = \nu_\theta(t, M_t) \dd W_t$. Crucially, the volatility function $\nu_\theta$ is not postulated a priori but is implicitly defined by the algebraic constraint $g_\theta(t, M_t, \nu_\theta(t, M_t)) = 0$, where $g_\theta$ is a specialization of the BSDE driver. We develop the stochastic calculus for this process and prove a Girsanov-type theorem for the quadratic case, showing that an NBM acquires a drift under a new, learned measure. The character of this measure, whether pessimistic or optimistic, is endogenously determined by the learned parameters $\theta$, providing a rigorous foundation for models where the attitude towards uncertainty is a discoverable feature.
\end{abstract}

\begin{keyword}[class=MSC2020]
\kwd[Primary ]{60H10}
\kwd{93E20}
\kwd[; secondary ]{60K35}
\kwd{91G80}
\kwd{35K59}
\kwd{68T07}
\end{keyword}

\begin{keyword}
% -- Free-Text Keywords --
% These terms describe the main concepts and techniques for indexing and search purposes.
\kwd{Backward Stochastic Differential Equations}
\kwd{Non-linear Expectation}
\kwd{Neural Networks}
\kwd{Mean-Field Systems}
\kwd{Propagation of Chaos}
\kwd{Stochastic Analysis}
\end{keyword}

\end{frontmatter}

\section{Introduction}

The standard $d$-dimensional Brownian motion $(W_t)_{t \ge 0}$ is the foundation of modern stochastic analysis. Its defining properties, continuous paths, stationary independent increments, and its status as a martingale with respect to the standard linear expectation $\E$ under a known probability measure $\Prob$, make it the canonical model for pure, unstructured noise. This classical paradigm, powerful as it is, rests fundamentally on the assumption of a single, unambiguous probability law governing the system.

In a vast array of scientific and economic domains, however, this assumption is untenable. Agents must make decisions in the face of model uncertainty, where the underlying probability law is itself not fully known. The theory of non-linear expectations, particularly through the lens of Backward Stochastic Differential Equations (BSDEs) and the associated $g$-expectations pioneered by Peng, provides the canonical mathematical framework for analyzing dynamics under such uncertainty (e.g., \cite{PardouxPeng1990,ElKarouiPengQuenez1997}). A non-linear expectation operator $\cEneutheta$, generated by a BSDE with a driver function $f_\theta$, implicitly defines a set of plausible alternative models (see \cite{qi2025neuralexpectationoperators}). Building on this, recent advances in learning implicit models have suggested parameterizing this driver $f_\theta$ with a neural network, allowing the very structure of the model uncertainty to be learned from data.

This naturally poses a foundational question: What is the fundamental stochastic process that inhabits this world of learned ambiguity, in the same way that Brownian motion inhabits the classical world?

This paper provides the answer by introducing and developing the theory of the \textbf{Neural-Brownian Motion (NBM)}. We propose a new axiomatic framework where the classical martingale axiom, $\E[W_t|\Fcal_s] = W_s$, is replaced by its non-linear counterpart with respect to the neural expectation operator, $\cEneutheta[M_t|\Fcal_s] = M_s$. This single, decisive change fundamentally alters the nature of the process. For a process $M$ to be a martingale with respect to the operator it generates, it must be identified with the solution $Y$ of the defining BSDE. Consequently, the general BSDE driver $f_\theta(t, x, y, z)$, which depends on time, an exogenous state process $X$, the BSDE solution $Y$, and its martingale part $Z$, must be evaluated with its state arguments tied to the process itself. This identification is the source of the process's rich structure; the axioms of stationary and independent increments are necessarily abandoned, as the process's evolution is now endogenously determined by its own state in a manner dictated by the learned driver $f_\theta$.

The main contributions of this paper, which establish the NBM as a well-defined and powerful theoretical object, are as follows:
\begin{enumerate}
    \item \textbf{Axiomatic Foundation and Characterization:} We establish a rigorous axiomatic framework for a general NBM as a continuous $\cEneutheta$-martingale. We prove (\Cref{prop:drift_characterization_revised}) that this abstract property is equivalent to a concrete algebraic identity linking the process's Itô decomposition, $\dd M_t = b_t \dd t + \sigma_t \dd W_t$, to a specialization of the neural driver, $g_\theta$:
    \[
        b_t = -g_\theta(t, M_t, \sigma_t).
    \]
    We then define the \textbf{canonical Neural-Brownian Motion} as the fundamental case where the process has zero drift under the objective physical measure $\Prob$ ($b_t=0$). This makes it the direct non-linear analogue of a standard Brownian motion.

    \item \textbf{Existence and Representation Theorem:} Our central result (\Cref{thm:nbm_representation}) is an existence and representation theorem for canonical NBMs. We prove that under a key structural hypothesis on the driver—namely, that the algebraic equation $g_\theta(t, x, z) = 0$ admits a unique, regular root $z = \nu_\theta(t,x)$—a canonical NBM exists and is the unique strong solution to the stochastic differential equation:
    \[
        \dd M_t = \nu_\theta(t, M_t) \dd W_t.
    \]
    The volatility function $\nu_\theta$ is therefore not postulated a priori but is an emergent property derived from the learned structure of uncertainty encoded in $g_\theta$. We further prove (\Cref{thm:existence_of_theta}) that drivers satisfying such structural properties can be constructed systematically, ensuring this class of models is non-empty and well-posed.

    \item \textbf{Stochastic Calculus and Girsanov-Type Interpretation:} We develop the stochastic calculus for canonical NBMs, deriving their infinitesimal generator $\Lcaltheta$ (\Cref{thm:nbm_generator}). For the important subclass of quadratic drivers, we prove a Girsanov-type theorem (\Cref{thm:neural_girsanov_revised}) that offers a profound interpretation of the framework. We show that a canonical NBM acquires a non-zero, state-dependent drift under a new, learned measure $\Qbb_\theta$, whose character is endogenously determined by the learned parameters.

    \item \textbf{Expressiveness and Systemic Behavior:} We demonstrate the breadth and power of the NBM framework. First, we prove a universal approximation theorem (\Cref{thm:nbm_uat}), which establishes that canonical NBMs are sufficiently expressive to approximate any standard diffusion process on a compact set. Second, we outline a program for the rigorous analysis of the mean-field limit of large systems of interacting NBMs, formally deriving the governing Neural McKean-Vlasov equation (\Cref{cor:mckean_vlasov_pde}). Finally, we showcase a direct and now rigorously consistent application by constructing a novel class of Implicit Volatility Models for financial option pricing.
\end{enumerate}

\section{Preliminaries: The Neural Expectation Operator (see \cite{qi2025neuralexpectationoperators})}
\label{sec:prelim}

We begin by establishing the mathematical framework. Let $(\Omega, \Fcal, \Prob)$ be a complete probability space, and let $T > 0$ be a fixed finite time horizon. We consider a standard $d$-dimensional Brownian motion $W = (W_t)_{t \in [0,T]}$, and we denote by $\mathbb{F} = (\Fcal_t)_{t \in [0,T]}$ the natural filtration generated by $W$, augmented to satisfy the usual conditions.

\begin{definition}[General Neural Network Driver]
\label{def:driver}
A \textbf{neural network driver} is a function $f_\theta: [0,T] \times \R^k \times \R^k \times \R^{k \times d} \to \R^k$ parameterized by $\theta \in \Theta \subset \R^p$. The arguments $(t,x,y,z)$ represent:
\begin{itemize}[noitemsep, topsep=3pt]
    \item time $t \in [0,T]$,
    \item the state $x \in \R^k$ of an $\mathbb{F}$-adapted exogenous process $X$,
    \item the value $y \in \R^k$ of the BSDE solution process $Y$,
    \item the martingale density $z \in \R^{k \times d}$ of the BSDE solution process $Z$.
\end{itemize}
The function $f_\theta$ is also known as the \textit{generator} of the BSDE.
\end{definition}

\begin{remark}[Realization of the Driver]
The function $f_\theta$ can be realized in several ways. The most direct is to let $f_\theta(t,x,y,z) = \text{NN}_\theta(t,x,y,z)$, where $\text{NN}_\theta$ is a neural network. Alternatively, the network can parameterize a known functional form, a common approach for drivers with quadratic growth in $z$. The theoretical framework presented applies to any realization, provided it satisfies the regularity conditions below.
\end{remark}

\begin{definition}[Neural Expectation Operator]
\label{def:neural_expectation}
Let $f_\theta$ be a neural network driver. Let $X$ be an $\mathbb{F}$-adapted process in $\R^k$ and let $\xi$ be an $\Fcal_T$-measurable, square-integrable random variable in $\R^k$. The \textbf{Neural Expectation} of $\xi$ at time $t$ conditional on $\Fcal_t$, denoted $\cEneutheta[\xi | \Fcal_t]$, is defined as:
\[ \cEneutheta[\xi | \Fcal_t] \coloneqq Y_t, \]
where the pair of $\mathbb{F}$-adapted processes $(Y,Z)$ is the unique solution on $[t,T]$ to the Backward Stochastic Differential Equation:
\begin{equation} \label{eq:bsde_def_revised}
    -\dd Y_s = f_\theta(s, X_s, Y_s, Z_s) \dd s - Z_s \dd W_s, \quad s \in [t,T],
\end{equation}
with the terminal condition $Y_T = \xi$. The process $Y$ is valued in $\R^k$ and $Z$ is valued in $\R^{k \times d}$.
\end{definition}

\subsection{Well-Posedness and Theoretical Foundation}

We adopt the setting of \cite{Kobylanski2000} for drivers with quadratic growth in $z$, extended to the multi-dimensional case.

\begin{theorem}[Existence and Uniqueness for Multi-Dimensional Quadratic BSDEs]
\label{thm:wellposedness_multidim_recap}
Let the following assumptions hold for a given $\theta \in \Theta$:
\begin{assumption}[Regularity Conditions]
\label{ass:wellposedness_multidim}
The driver $f_\theta: [0,T] \times \R^k \times \R^k \times \R^{k \times d} \to \R^k$ and terminal condition $\xi$ satisfy:
\begin{enumerate}[label=(\roman*)]
    \item \textbf{Measurability and Continuity:} $(t,x,y,z) \mapsto f_\theta(t,x,y,z)$ is jointly measurable, and for a.e. $t \in [0,T]$, it is continuous in $(x,y,z)$.
    \item \textbf{Monotonicity in $y$:} There exists a constant $\mu \in \R$ such that for all $(t,x,z)$ and all $y_1, y_2 \in \R^k$:
    \[ \scpr{y_1 - y_2}{f_\theta(t,x,y_1,z) - f_\theta(t,x,y_2,z)} \le \mu \norm{y_1 - y_2}^2. \]
    \item \textbf{Quadratic Growth in $z$:} There exists a function $\kappa: \R^+ \to \R^+$ and a constant $\alpha \ge 0$ such that for all arguments,
    \[ \norm{f_\theta(t,x,y,z)} \le \kappa(\norm{x}) + \kappa(\norm{y}) + \frac{\alpha}{2}\norm{z}_F^2, \]
    where $\norm{\cdot}_F$ is the Frobenius norm on $\R^{k \times d}$.
    \item \textbf{Integrability:} The terminal value $\xi$ is bounded. The exogenous process $X$ has paths in a compact set, i.e., $\sup_{t \in [0,T]} \norm{X_t(\omega)} \le C_X < \infty$ a.s.
\end{enumerate}
\end{assumption}
Then, for any such bounded, $\Fcal_T$-measurable terminal condition $\xi$, the BSDE
\begin{equation} \label{eq:bsde_def_multidim_revised}
    -\dd Y_s = f_\theta(s, X_s, Y_s, Z_s) \dd s - Z_s \dd W_s, \quad Y_T = \xi,
\end{equation}
admits a unique solution pair $(Y,Z)$ in the space $\mathcal{S}^\infty([0,T];\R^k) \times \mathcal{H}^2_{\mathrm{BMO}}([0,T];\R^{k \times d})$.
\end{theorem}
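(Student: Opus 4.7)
The plan is to follow the standard three-step program for quadratic BSDEs, adapted to the multi-dimensional monotone setting: first approximate $f_\theta$ by a sequence of globally Lipschitz drivers, then derive uniform $\mathcal{S}^\infty$ and $\mathcal{H}^2_{\mathrm{BMO}}$ bounds on the approximating solutions, and finally pass to the limit. Uniqueness is handled separately by an Itô--Gronwall argument performed under a Girsanov change of measure.

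For the approximation step I would set $f^n_\theta(t,x,y,z) := f_\theta(t,x,y,\rho_n(z))$, where $\rho_n:\R^{k\times d}\to\R^{k\times d}$ is a smooth radial cutoff at level $n$. The truncated driver inherits the monotonicity assumption (ii) and becomes globally Lipschitz in $z$, so the classical Pardoux--Peng theory yields a unique solution $(Y^n,Z^n)\in\mathcal{S}^2\times\mathcal{H}^2$. The substantive work lies in the a priori estimates. Applying Itô's formula to $e^{-2\mu s}\norm{Y^n_s}^2$, the cross term $\scpr{Y^n_s}{f^n_\theta(s,X_s,Y^n_s,Z^n_s)}$ is absorbed by (ii) up to a linear-in-$\norm{Y^n}^2$ remainder, while (iii) together with the boundedness of $\xi$ and $X$ from (iv) close the estimate and yield $\norm{Y^n}_{\mathcal{S}^\infty}\le R$ uniformly in $n$. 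With $Y^n$ confined to a ball of radius $R$, I would then apply Itô to a convex exponential-type test function, e.g.\ $\phi(y)=(\exp(\alpha\norm{y}^2)-1)/\alpha$, chosen so that the Hessian contribution to $\tfrac{1}{2}\operatorname{tr}(\phi''(Y^n)Z^n(Z^n)^\top)$ dominates the quadratic growth term $\tfrac{\alpha}{2}\norm{Z^n}_F^2$ inherited from $f_\theta$. This is the multi-dimensional analogue of Kobylanski's exponential transform and yields $\E[\int_\tau^T\norm{Z^n_s}_F^2\,\dd s\mid\Fcal_\tau]\le C$ for every stopping time $\tau$, i.e., a uniform bound in $\mathcal{H}^2_{\mathrm{BMO}}$.

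The main obstacle is the passage to the limit: multi-dimensional quadratic BSDEs generally lack a comparison principle, so Kobylanski's monotone-stability lemma cannot be invoked componentwise. My plan is to use BMO compactness: extract a subsequence along which $Y^n\to Y$ uniformly in $\mathcal{S}^\infty$ and $Z^n\to Z$ weakly in $\mathcal{H}^2$, and then upgrade to strong $\mathcal{H}^2$-convergence by expanding $\norm{Y^n-Y^m}^2$ via Itô. The drift cross-term is controlled linearly by (ii); the $z$-increment of $f_\theta$ is linearized through a mean-value representation along the chord between $Z^n$ and $Z^m$. Because $Z^n,Z^m$ are uniformly BMO, the resulting integrand is itself BMO, so Kazamaki's criterion makes the associated Doléans exponential a true martingale, and a Girsanov change of measure removes the martingale cross-term and enables a Gronwall closure. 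Continuity of $f_\theta$ together with dominated convergence under the reverse-Hölder weight of the BMO density then identify $(Y,Z)$ as a solution of \eqref{eq:bsde_def_multidim_revised} in $\mathcal{S}^\infty\times\mathcal{H}^2_{\mathrm{BMO}}$.

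Uniqueness follows a parallel scheme: given two solutions $(Y,Z)$ and $(Y',Z')$, expanding $\norm{Y_s-Y'_s}^2$ by Itô, absorbing the $y$-discrepancy via the monotonicity (ii), and linearizing the $z$-discrepancy along the chord from $Z$ to $Z'$ produces a BMO integrand whose associated Girsanov measure eliminates the local martingale contribution. Gronwall then forces $Y\equiv Y'$, and the Itô isometry under the new measure yields $Z\equiv Z'$, completing the proof.
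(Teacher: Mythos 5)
The paper does not actually prove this theorem: the ``proof'' given is a citation to Kobylanski (for $k=1$) and to Briand--Delyon--Hu--Pardoux--Stoica and Briand--Hu (for the multi-dimensional and BMO aspects), together with an explicit statement that the proof is omitted. You, by contrast, attempt a full derivation. That is a genuinely different route, and it is worth saying clearly that the program you describe (truncation to Lipschitz drivers, uniform a priori estimates, BMO-weak compactness, Girsanov for uniqueness) is indeed the morally correct skeleton from the literature. However, two of your central estimates do not close as stated, and these are precisely the points at which the multi-dimensional quadratic theory is known to be genuinely hard rather than a routine extension of Kobylanski.

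First, the uniform $\mathcal{S}^\infty$ bound. Applying It\^o to $e^{-2\mu s}\norm{Y^n_s}^2$, the drift produces the term $-2\scpr{Y^n_s}{f^n_\theta(s,X_s,Y^n_s,Z^n_s)}$. Monotonicity (ii) only controls \emph{differences}: writing $\scpr{Y^n}{f^n(Y^n,Z^n)} = \scpr{Y^n - 0}{f^n(Y^n,Z^n)-f^n(0,Z^n)} + \scpr{Y^n}{f^n(0,Z^n)}$, condition (ii) bounds the first piece by $\mu\norm{Y^n}^2$, but the second piece is bounded by $\norm{Y^n}\bigl(\kappa(\norm{X}) + \kappa(0) + \tfrac{\alpha}{2}\norm{Z^n}_F^2\bigr)$ via (iii). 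The resulting contribution $\alpha\norm{Y^n}\norm{Z^n}_F^2$ cannot be absorbed by the It\^o correction $\norm{Z^n}_F^2$ unless one already knows $\alpha\norm{Y^n}\le 1$, which is exactly what you are trying to prove. This circularity is the reason the multi-dimensional case requires either a smallness assumption (on $\alpha\norm{\xi}_\infty$ or the horizon) or additional structural hypotheses (diagonally quadratic, triangular, Markovian with small BMO data, etc.); none of these appear in your argument.

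Second, and for the same reason, your exponential-test-function estimate for the BMO norm also does not dominate the quadratic term in general. Taking $\phi(y)=e^{\lambda\norm{y}^2}$, the favorable It\^o term scales like $\lambda e^{\lambda\norm{Y^n}^2}\norm{Z^n}_F^2$ while the adverse drift term scales like $\lambda\alpha e^{\lambda\norm{Y^n}^2}\norm{Y^n}\norm{Z^n}_F^2$; domination again requires $\alpha\norm{Y^n}\le 1$, not merely boundedness. In one dimension Kobylanski avoids this by combining the exponential transform with a comparison theorem to obtain the sup bound \emph{first}, but as you yourself note, comparison fails in $\R^k$. So the a priori estimates are the real obstacle, not the passage to the limit or uniqueness (your Girsanov/BMO argument for uniqueness is fine once BMO bounds are in hand). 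To make the argument honest you would need to either add a hypothesis ensuring $\alpha\norm{\xi}_\infty$ is small, restrict to a structured driver class, or explicitly invoke one of the known multi-dimensional existence theorems under such conditions -- which is essentially what the paper does by citing the literature instead of proving the result.
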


\begin{proof}
This theorem is a cornerstone result in the theory of Backward Stochastic Differential Equations with quadratic growth. The one-dimensional case ($k=1$) with a bounded terminal condition was established in the seminal work of Kobylanski \cite{Kobylanski2000}. The extension to the multi-dimensional setting ($k>1$) is highly non-trivial. The proof of existence relies on sophisticated a priori estimates, while the proof of uniqueness hinges critically on the monotonicity condition (ii) and the theory of BMO martingales. A complete treatment and rigorous proof can be found in the foundational papers by \cite{BriandDelyonHuPardouxStoica2003} and in \cite{BriandHu2008} for related BMO estimates. We therefore omit the proof and refer the interested reader to these sources for a full derivation.
\end{proof}

\begin{remark}[On the Necessity of Stronger Assumptions]
This proof highlights the significant technical jump from the scalar to the multi-dimensional case. The simple and elegant exponential transform proof of existence is replaced by a more arduous approximation argument. More importantly, uniqueness is no longer guaranteed by the quadratic structure alone. The \textbf{monotonicity condition (ii)} is essential for the uniqueness proof. While this condition is standard in the BSDE literature, it represents a strong structural constraint on the neural network driver $f_\theta$. For our subsequent results on Neural-Brownian Motion to hold in the multi-dimensional setting, we must assume that the learned driver $\theta$ falls into the class of functions satisfying this property.
\end{remark}

\begin{remark}[Specialization of the Driver for Self-Referential Processes]
\label{rem:driver_specialization}
The core theory of this paper concerns a process whose dynamics are determined by its own state. For such a self-referential process $M_t$, which we will define as a martingale under the operator it generates, the process $M_t$ plays the role of \textit{both} the state process and the value process. That is, we have the identification $X_t = Y_t = M_t$. The general driver $f_\theta(t,x,y,z)$ is therefore evaluated with its second and third arguments being identical: $f_\theta(t, M_t, M_t, Z_t)$.

To improve clarity and rigor, we explicitly define a \textbf{specialized driver} $g_\theta$ which captures this self-referential structure. Let the general driver be $f_\theta: [0,T] \times \R^k \times \R^k \times \R^{k \times d} \to \R^k$. For a $k$-dimensional process, we define:
\[ g_\theta(t, m, z) \coloneqq f_\theta(t, m, m, z). \]
This function $g_\theta: [0,T] \times \R^k \times \R^{k \times d} \to \R^k$ is the fundamental object for our theory. In the one-dimensional setting ($k=1, d=1$), which we focus on for conceptual development, the signature is $g_\theta: [0,T] \times \R \times \R \to \R$. All subsequent uses of the driver in the context of an NBM will refer to this specialized function $g_\theta$.
\end{remark}

\subsection{On the Existence of Structurally-Constrained Neural Drivers}

A critical consideration is whether the set of parameters $\theta$ for which the driver $f_\theta$ satisfies the strong structural assumptions required by our theory is non-empty. We demonstrate that this is not only the case, but that these assumptions can be satisfied by construction, by imposing specific architectural constraints on the neural network.

\begin{theorem}[Existence of Well-Behaved Neural Drivers]
\label{thm:existence_of_theta}
The class of neural drivers that satisfy both the multi-dimensional well-posedness conditions of \Cref{ass:wellposedness_multidim} and the unique implicit volatility condition of \Cref{ass:implicit_vol} (adapted to the multi-dimensional case) is non-empty. Specifically, one can define families of neural network architectures parameterized by $\theta$ such that any choice of $\theta$ within the family produces a driver with the required properties.
\end{theorem}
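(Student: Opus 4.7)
The proof will be constructive: I propose to exhibit a family of neural architectures whose structural form enforces, by design, every condition in \Cref{ass:wellposedness_multidim} together with the implicit-volatility condition of \Cref{ass:implicit_vol}. Non-emptiness of the admissible class then reduces to checking the conditions on a single template and observing that the trainable parameters of that template are unconstrained, so they fill an open subset of $\R^p$ of positive Lebesgue measure. This turns the existence claim into a finite, explicit verification rather than an argument about generic networks.

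The candidate template I would adopt is
\begin{equation*}
    g_\theta(t,x,z) \;=\; H_\theta(t,x)\bigl(z - \nu_\theta(t,x)\bigr),
\end{equation*}
where $\nu_\theta$ is realized as a bounded Lipschitz neural network (e.g.\ a feed-forward tanh network whose output is passed through a smooth saturation), and $H_\theta$ is parameterized to be uniformly positive definite, for instance $H_\theta = \epsilon I + B_\theta^\top B_\theta$ with a fixed $\epsilon>0$ and a bounded-output sub-network $B_\theta$. The full driver is obtained by adding a dissipative $y$-term, $f_\theta(t,x,y,z) = -\mu y + g_\theta(t,x,z)$ with $\mu\ge 0$. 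Verification of \Cref{ass:wellposedness_multidim} is then essentially immediate: joint measurability and continuity follow from the smoothness of the activations; monotonicity in $y$ holds with constant $-\mu\le 0$ because the $y$-dependence is affine and strictly dissipative; and since $\nu_\theta$ and $H_\theta$ are bounded, $g_\theta$ is in fact at most linear in $z$ and therefore trivially satisfies the quadratic-growth bound for any $\alpha>0$, with $\kappa$ a constant incorporating the supremum norms. The algebraic equation $g_\theta(t,x,z)=0$ is equivalent, by the uniform invertibility of $H_\theta$, to $z=\nu_\theta(t,x)$, which produces the unique implicit-volatility function required by \Cref{ass:implicit_vol} and inherits the Lipschitz regularity and boundedness of $\nu_\theta$.

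The principal obstacle is to propagate the precise regularity of $\nu_\theta$, namely global Lipschitz continuity in $x$ together with the uniform non-degeneracy of $\nu_\theta\nu_\theta^\top$, through the composition rules of the network so that the SDE $\dd M_t = \nu_\theta(t,M_t)\dd W_t$ of \Cref{thm:nbm_representation} is strongly solvable. I would handle this by fixing an overall Lipschitz budget $L$ and either capping spectral norms of the weight matrices layer by layer, or arguing that the open subset of $\R^p$ on which the composed network is $L$-Lipschitz is non-empty. A secondary subtlety is that in the multi-dimensional setting the monotonicity condition (ii) is a genuine structural constraint; this is accommodated by restricting the $y$-block of the architecture to be strictly dissipative (e.g.\ linear with a negative-definite coefficient) rather than an arbitrary neural function of $y$. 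With these architectural guardrails in place, the existence theorem reduces to an explicit checklist on the exhibited template.
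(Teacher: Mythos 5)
Your proposal takes a genuinely different constructive route from the paper's. Both proofs secure monotonicity in $y$ (Part~1 of the paper's argument) identically, by adding a dissipative affine term $-\mu y$ to a $y$-independent core; the divergence is entirely in how the unique implicit volatility is arranged. The paper builds the core $\text{NN}_\theta(t,m,z)$ to be \emph{strictly monotone in $z$} by architectural constraint (positive-weight paths composed with nondecreasing activations), which delivers uniqueness of the root, and then separately secures \emph{existence} of a root by inserting a learnable state-dependent vertical shift so the monotone map crosses the required level. The volatility $\nu_\theta$ is therefore genuinely implicit throughout. You instead parameterize $\nu_\theta$ directly as a bounded network and wrap the driver around it, $g_\theta(t,x,z)=H_\theta(t,x)\bigl(z-\nu_\theta(t,x)\bigr)$ with $H_\theta$ uniformly positive; the zero set collapses to $z=\nu_\theta(t,x)$ by construction, so existence and uniqueness of the root are immediate, no crossing argument is needed, and the regularity condition $\partial_z g_\theta = H_\theta>0$ at the root is automatic. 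Your route is shorter and makes the non-emptiness claim transparent, at the cost of making the ``implicit'' volatility explicit from the outset; the paper's route exhibits a sub-class that better reflects the intended implicit character, where $\nu_\theta$ is an emergent root of a learned monotone map rather than a direct model output. Both are valid proofs of the stated existence claim. One small gap to close: \Cref{ass:implicit_vol}(a) requires the root to be \emph{positive}, so the $\nu_\theta$ sub-network must be forced into $(0,\infty)$ (e.g.\ $\nu_\theta=\epsilon+\mathrm{softplus}(\cdot)$ for fixed $\epsilon>0$) rather than merely bounded through a tanh saturation, which can take nonpositive values.
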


\begin{proof}
The proof is constructive. We show how to design a neural network architecture for the specialized driver $g_\theta(t,m,z)$ that enforces the desired properties. Recall that $g_\theta(t,m,z) \coloneqq f_\theta(t,m,m,z)$. The properties we need to enforce are:
\begin{enumerate}
    \item[(A)] The general driver $f_\theta(t,x,y,z)$ must be monotone in $y$ for BSDE well-posedness.
    \item[(B)] The specialized driver equation $g_\theta(t,m,z)=0$ must admit a unique, regular root $z=\nu_\theta(t,m)$.
\end{enumerate}

\noindent\textbf{Part 1: Enforcing Monotonicity in $y$ (Property A).}
We can enforce the monotonicity condition of \Cref{ass:wellposedness_multidim}(ii) by separating the dependence on $y$. Let us structure the general driver $f_\theta$ as:
\[
    f_\theta(t,x,y,z) \coloneqq \text{NN}_\theta(t,x,z) - \mu y,
\]
where $\text{NN}_\theta: [0,T] \times \R^k \times \R^{k \times d} \to \R^k$ is a neural network that does \textit{not} take $y$ as an input, and $\mu \ge 0$ is a fixed or learnable hyperparameter.
Let's check the monotonicity condition for this structure:
\begin{align*}
    \scpr{y_1 - y_2}{f_\theta(t,x,y_1,z) - f_\theta(t,x,y_2,z)} &= \scpr{y_1 - y_2}{(\text{NN}_\theta(t,x,z) - \mu y_1) - (\text{NN}_\theta(t,x,z) - \mu y_2)} \\
    &= \scpr{y_1 - y_2}{-\mu(y_1 - y_2)} \\
    &= -\mu \norm{y_1 - y_2}^2.
\end{align*}
This satisfies \Cref{ass:wellposedness_multidim}(ii). Thus, by architectural design, we guarantee the BSDE is well-posed for any underlying neural network $\text{NN}_\theta$ (provided it satisfies basic continuity and growth conditions).

\noindent\textbf{Part 2: Enforcing a Unique Implicit Volatility (Property B).}
We now analyze the specialized driver $g_\theta$ that results from this structure:
\[
    g_\theta(t,m,z) = f_\theta(t,m,m,z) = \text{NN}_\theta(t,m,z) - \mu m.
\]
The defining equation for a canonical NBM is $g_\theta(t,M_t, \sigma_t) = 0$, which becomes:
\begin{equation} \label{eq:proof_nn_root}
    \text{NN}_\theta(t, M_t, \sigma_t) = \mu M_t.
\end{equation}
To satisfy \Cref{ass:implicit_vol}, we need the function $z \mapsto \text{NN}_\theta(t,m,z)$ to be shaped such that it intersects the constant level $\mu m$ at exactly one positive point. The most direct way to ensure this is to enforce strict monotonicity in $z$.

Let us focus on the one-dimensional case ($k=1, d=1$) for clarity; the extension to matrix-valued volatility is conceptually similar but technically more involved. We want $z \mapsto \text{NN}_\theta(t,m,z)$ to be strictly monotonic for $z>0$.

\textbf{Construction:}
\begin{enumerate}
    \item \textbf{Monotonicity in $z$:} We can guarantee that $z \mapsto \text{NN}_\theta(t,m,z)$ is strictly increasing by architectural design. We use a feedforward network where all activation functions are monotonic and non-decreasing (e.g., Softplus, ELU, SiLU, or even linear). Furthermore, we constrain the weights along any path from the input node for $z$ to the output node to be positive. This is easily done by parameterizing the weights as $w = \exp(\hat{w})$ or $w = \text{softplus}(\hat{w})$, where the network learns the underlying parameters $\hat{w} \in \R$. The product of positive weights and composition of non-decreasing activation functions ensures that the partial derivative of the output with respect to $z$ is strictly positive.

    \item \textbf{Existence of a root:} A strictly monotonic function has at most one root. To guarantee existence, we need the function to cross the required level. This can be achieved by including learnable biases that depend on $(t,m)$. For example, we can model $\text{NN}_\theta$ as:
    \[
        \text{NN}_\theta(t,m,z) \coloneqq h_{\theta_1}(t,m,z) - h_{\theta_2}(t,m),
    \]
    where $h_{\theta_1}$ is constructed to be strictly increasing in $z$ (as described above), and $h_{\theta_2}$ is another neural network that acts as a learnable, state-dependent vertical shift. The equation becomes $h_{\theta_1}(t,m,z) = \mu m + h_{\theta_2}(t,m)$. Since the range of a monotonic neural network like $h_{\theta_1}$ can be made to span $\R$ (or $\R^+$), a solution is guaranteed to exist.
\end{enumerate}
By this construction, for any parameters $\theta = (\theta_1, \theta_2)$, the function $z \mapsto g_\theta(t,m,z)$ is strictly monotonic, ensuring at most one root. The learnable shift term ensures a root exists. This construction produces a driver that satisfies \Cref{ass:implicit_vol} by design. The derivative $\partial_z g_\theta = \partial_z \text{NN}_\theta$ is positive, satisfying the regularity condition.

Therefore, the set of parameters $\theta$ that yields a well-behaved driver is not only non-empty, but we can restrict our learning algorithm to this set a priori.
\end{proof}

\section{Axiomatic Definition of Neural-Brownian Motion}
\label{sec:axiomatic_nbm}

We now introduce the Neural-Brownian Motion, focusing on the one-dimensional case ($k=1, d=1$) for clarity. The defining feature is that the process is a martingale with respect to the non-linear expectation operator it generates, as captured by the specialized driver $g_\theta$ from \Cref{rem:driver_specialization}.

\begin{definition}[Neural-Brownian Motion]
\label{def:nbm_axioms}
Let a Neural Expectation Operator $\cEneutheta$ be given, generated by a driver $f_\theta$ satisfying \Cref{ass:wellposedness_multidim}. Let $g_\theta$ be its specialization as defined in \Cref{rem:driver_specialization}. A one-dimensional stochastic process $(M_t)_{t \ge 0}$ is a \textbf{Neural-Brownian Motion (NBM)} with respect to $\cEneutheta$ if it is an Itô process satisfying:
\begin{enumerate}[label=(\roman*), leftmargin=*]
    \item \textbf{Initial Value:} $M_0 = 0$.
    \item \textbf{Continuity:} The paths $t \mapsto M_t(\omega)$ are continuous for almost all $\omega \in \Omega$.
    \item \textbf{$\cEneutheta$-Martingale Property:} For any times $0 \le s \le t \le T$, the process satisfies $M_s = \cEneutheta[M_t | \Fcal_s]$.
\end{enumerate}
Furthermore, a \textbf{canonical Neural-Brownian Motion} is an NBM that has zero drift with respect to the physical measure $\Prob$.
\end{definition}

\begin{remark}
By \Cref{def:neural_expectation}, the $\cEneutheta$-martingale property is equivalent to the condition that for any $s \in [0,T]$, the process $(M_u)_{u \in [s,T]}$ is identical to the $Y$-component of the unique solution to the BSDE
\[ -\dd Y_u = f_\theta(u, M_u, Y_u, Z_u)\dd u - Z_u \dd W_u, \quad \text{on } [s,T] \text{ with terminal condition } Y_T = M_T. \]
This self-referential structure is the defining feature of the NBM. Note that we take the terminal condition at time $T$, and the property must hold for all $t \leq T$.
\end{remark}

The following proposition provides a crucial, computationally useful characterization of the martingale property.

\begin{proposition}[Drift Characterization of the $\cEneutheta$-Martingale Property]
\label{prop:drift_characterization_revised}
Let $(M_t)_{t\ge 0}$ be an Itô process with decomposition $\dd M_t = b_t \dd t + \sigma_t \dd W_t$, where $b$ and $\sigma$ are predictable processes satisfying appropriate integrability conditions. Assume the driver $f_\theta$ meets the regularity conditions of \Cref{ass:wellposedness_multidim}. Then, $M$ satisfies the $\cEneutheta$-martingale property of \Cref{def:nbm_axioms}(iii) if and only if its drift $b_t$ satisfies the algebraic identity:
\begin{equation} \label{eq:drift_identity_revised}
    b_t = -g_\theta(t, M_t, \sigma_t) \quad \text{for a.e. } t \in [0,T].
\end{equation}
\end{proposition}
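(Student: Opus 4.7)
The plan is to prove the equivalence by reducing the $\cEneutheta$-martingale property to the uniqueness of BSDE solutions guaranteed by \Cref{thm:wellposedness_multidim_recap}, and then reading the algebraic identity off from a comparison of Itô decompositions. The central observation is that the self-referential substitution $Y = M$ collapses the general driver $f_\theta(t,x,y,z)$ to its specialization $g_\theta(t,m,z) = f_\theta(t,m,m,z)$. For the forward direction, assume $M$ is an $\cEneutheta$-martingale. By the remark following \Cref{def:nbm_axioms}, for each $s \in [0,T]$ the $Y$-component of the BSDE on $[s,T]$ with driver $f_\theta(u, M_u, Y_u, Z_u)$, exogenous state $M$, and terminal condition $M_T$ coincides with $M$ itself; substituting $Y_u = M_u$ and writing the BSDE in forward form yields $\dd M_u = -g_\theta(u, M_u, Z_u)\,\dd u + Z_u\,\dd W_u$. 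Comparing with the assumed decomposition $\dd M_u = b_u\,\dd u + \sigma_u\,\dd W_u$ and invoking uniqueness of the continuous semimartingale decomposition (equality of finite-variation parts and of continuous local-martingale parts) yields $\sigma_u = Z_u$ and $b_u = -g_\theta(u, M_u, \sigma_u)$ for a.e. $u$.

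For the converse, assume the drift identity holds. Fix $0 \le s \le t \le T$ and set $Z_u := \sigma_u$ on $[s,t]$. Since $g_\theta(u, M_u, \sigma_u) = f_\theta(u, M_u, M_u, \sigma_u)$ by definition, the pair $(M, Z)$ on $[s,t]$, with exogenous state $X = M$ and terminal condition $Y_t = M_t$, satisfies \eqref{eq:bsde_def_revised}; by \Cref{thm:wellposedness_multidim_recap} it is \emph{the} unique solution, so $M_s = Y_s = \cEneutheta[M_t \mid \Fcal_s]$, which is exactly property (iii) of \Cref{def:nbm_axioms}. Running this argument on every sub-interval $[s,t] \subseteq [0,T]$ yields the conditional-expectation identity at all intermediate terminal times, not only at $T$.

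The main technical obstacle lies in verifying that the triple (terminal value $M_t$, exogenous state $M$, martingale density $\sigma$) actually fits the functional framework of \Cref{thm:wellposedness_multidim_recap}: $M_t$ must be bounded, $M$ must have paths in a compact set, and $\sigma$ must generate a BMO martingale, so that $(M, \sigma) \in \mathcal{S}^\infty \times \mathcal{H}^2_{\mathrm{BMO}}$. This is precisely the content of the phrase \emph{appropriate integrability conditions} in the statement; when these fail globally, one runs the above reduction on a localizing sequence of stopping times $\tau_n \uparrow T$ with $M^{\tau_n}$ bounded and passes to the limit in the standard way. Modulo this regularity point, the proof is essentially algebraic: the substitution $Y = M$ eliminates the freedom in the $y$-slot of $f_\theta$ and forces the drift to be the unique value that makes $(M, \sigma)$ a solution of its own generating BSDE.
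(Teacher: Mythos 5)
Your proof is correct and follows essentially the same route as the paper's: in both directions you reduce the $\cEneutheta$-martingale property to the uniqueness of the BSDE solution from \Cref{thm:wellposedness_multidim_recap} and then read off the identity by comparing Itô (Doob--Meyer) decompositions, using the specialization $g_\theta(t,m,z)=f_\theta(t,m,m,z)$. Your closing paragraph flagging the implicit requirement that $(M,\sigma)\in\mathcal{S}^\infty\times\mathcal{H}^2_{\mathrm{BMO}}$ (and the localization workaround) is a useful explicit acknowledgment of what the paper leaves tacit under the phrase ``appropriate integrability conditions.''
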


\begin{proof}
The proof consists of two parts: sufficiency and necessity.

\textbf{($\impliedby$) Sufficiency.}
We assume that the drift of the process $M_t$ satisfies the identity \eqref{eq:drift_identity_revised}. Our goal is to demonstrate that this implies the $\cEneutheta$-martingale property, i.e., $M_s = \cEneutheta[M_t | \Fcal_s]$ for $0 \le s \le t \le T$.

Let $s, t$ be arbitrary times such that $0 \le s \le t \le T$. By \Cref{def:neural_expectation}, the value of the neural expectation $\cEneutheta[M_t | \Fcal_s]$ is given by $Y_s$, where the pair of processes $(Y_u, Z_u)_{u \in [s,t]}$ is the unique solution to the backward stochastic differential equation defined by:
\begin{equation} \label{eq:proof_bsde}
    -\dd Y_u = f_\theta(u, M_u, Y_u, Z_u) \dd u - Z_u \dd W_u, \quad u \in [s,t], \quad Y_t = M_t.
\end{equation}
To prove our claim, we will show that the pair of processes $(M_u, \sigma_u)_{u \in [s,t]}$ is a solution to this BSDE. Let us define a candidate solution pair $(\tilde{Y}_u, \tilde{Z}_u) \coloneqq (M_u, \sigma_u)$ for $u \in [s,t]$.

First, we check the terminal condition. At $u=t$, the candidate solution gives $\tilde{Y}_t = M_t$, which matches the required terminal condition of BSDE \eqref{eq:proof_bsde}.

Second, we check if the candidate pair satisfies the dynamic equation. By hypothesis, the process $M_t$ has the forward Itô decomposition $\dd M_u = b_u \dd u + \sigma_u \dd W_u$. Our standing assumption is the algebraic identity \eqref{eq:drift_identity_revised}, which allows us to substitute for the drift term $b_u$:
\[
    \dd M_u = -g_\theta(u, M_u, \sigma_u) \dd u + \sigma_u \dd W_u.
\]
By the definition of the specialized driver $g_\theta$ (\Cref{rem:driver_specialization}), we have $g_\theta(t, m, z) \coloneqq f_\theta(t, m, m, z)$. Applying this to the equation above yields:
\[
    \dd M_u = -f_\theta(u, M_u, M_u, \sigma_u) \dd u + \sigma_u \dd W_u.
\]
Now, we rewrite this equation in terms of our candidate solution $(\tilde{Y}_u, \tilde{Z}_u) = (M_u, \sigma_u)$:
\[
    \dd \tilde{Y}_u = -f_\theta(u, M_u, \tilde{Y}_u, \tilde{Z}_u) \dd u + \tilde{Z}_u \dd W_u.
\]
Rearranging this into the standard backward form gives:
\[
    -\dd \tilde{Y}_u = f_\theta(u, M_u, \tilde{Y}_u, \tilde{Z}_u) \dd u - \tilde{Z}_u \dd W_u.
\]
This demonstrates that our candidate pair $(\tilde{Y}, \tilde{Z}) = (M, \sigma)$ on $[s,t]$ is indeed a solution to BSDE \eqref{eq:proof_bsde}.

By \Cref{thm:wellposedness_multidim_recap}, under \Cref{ass:wellposedness_multidim}, the solution $(Y,Z)$ to this BSDE is unique in the space $\mathcal{S}^\infty \times \mathcal{H}^2_{\mathrm{BMO}}$. Therefore, the unique solution $(Y_u, Z_u)$ must be indistinguishable from our constructed solution $(M_u, \sigma_u)$ for $u \in [s,t]$.

Finally, we invoke the definition of the neural expectation: $\cEneutheta[M_t|\Fcal_s] \coloneqq Y_s$. Since we have shown that $Y_s = M_s$, we conclude that $M_s = \cEneutheta[M_t|\Fcal_s]$. As the times $s$ and $t$ were arbitrary, the $\cEneutheta$-martingale property is satisfied. This completes the proof of sufficiency.

\vspace{1em}
\textbf{($\implies$) Necessity.}
We now assume that the process $M_t$ is an $\cEneutheta$-martingale, and let its Itô decomposition be $\dd M_t = b_t \dd t + \sigma_t \dd W_t$. We must prove that its coefficients satisfy the identity $b_t = -g_\theta(t, M_t, \sigma_t)$ for almost every $t \in [0,T]$.

The assumption that $M$ is an $\cEneutheta$-martingale means that for any pair of times $0 \le s \le t \le T$, we have $M_s = \cEneutheta[M_t | \Fcal_s]$. By \Cref{def:neural_expectation}, this means that for any such interval $[s,t]$, we have $M_s = Y_s$, where $(Y,Z)$ is the unique solution to the BSDE \eqref{eq:proof_bsde}.

The identity $M_u = Y_u$ must hold for all $u$ in any chosen interval $[s,t]$. This implies that the stochastic process $M=(M_u)_{u\in[0,T]}$ is indistinguishable from the process $Y=(Y_u)_{u\in[0,T]}$ which solves the system of BSDEs across all such intervals. Consequently, the two processes must have the same Itô decomposition.

Let us write down the forward dynamics for both processes:
\begin{enumerate}
    \item For $M$, the decomposition is given by assumption:
    \[ \dd M_u = b_u \dd u + \sigma_u \dd W_u. \]
    \item For $Y$, its dynamics are given by the BSDE \eqref{eq:proof_bsde}. Rearranging into forward form, we get:
    \[ \dd Y_u = -f_\theta(u, M_u, Y_u, Z_u) \dd u + Z_u \dd W_u. \]
\end{enumerate}
Since the processes $M$ and $Y$ are indistinguishable ($M_u=Y_u$ a.s. for all $u$), we can replace $Y_u$ with $M_u$ in the SDE for $Y$:
\[ \dd M_u = -f_\theta(u, M_u, M_u, Z_u) \dd u + Z_u \dd W_u. \]
We now have two Itô decompositions for the same process $M$:
\begin{align*}
    \dd M_u &= b_u \dd u + \sigma_u \dd W_u \\
    \dd M_u &= \left( -f_\theta(u, M_u, M_u, Z_u) \right) \dd u + Z_u \dd W_u
\end{align*}
By the uniqueness of the canonical Doob-Meyer decomposition of a continuous semimartingale into its finite-variation (drift) part and its local martingale (diffusion) part, the respective coefficients must be indistinguishable. Equating the drift and martingale terms yields two identities that must hold for a.e. $u \in [0,T]$:
\begin{align}
    b_u \dd u &= -f_\theta(u, M_u, M_u, Z_u) \dd u \label{eq:drift_match} \\
    \sigma_u \dd W_u &= Z_u \dd W_u \label{eq:mart_match}
\end{align}
From the martingale equality \eqref{eq:mart_match}, it follows that the processes $\sigma$ and $Z$ must be indistinguishable, i.e., $\sigma_u = Z_u$ for a.e. $u$.

We now substitute this result, $Z_u = \sigma_u$, into the drift equality \eqref{eq:drift_match}:
\[
    b_u = -f_\theta(u, M_u, M_u, \sigma_u).
\]
Finally, recalling the definition of the specialized driver, $g_\theta(u, M_u, \sigma_u) \coloneqq f_\theta(u, M_u, M_u, \sigma_u)$, we arrive at the desired conclusion:
\[
    b_u = -g_\theta(u, M_u, \sigma_u).
\]
This identity holds for almost every $u \in [0,T]$, which completes the proof of necessity.
\end{proof}

\begin{remark}[The Canonical Condition]
The condition $b_t=0$ for a canonical NBM is a crucial structural assumption. In light of \Cref{prop:drift_characterization_revised}, this imposes the fundamental algebraic constraint on the process's volatility:
\[ g_\theta(t, M_t, \sigma_t) = 0. \]
This equation provides a direct, albeit implicit, link between the learned specialized driver $g_\theta$ and the volatility $\sigma_t$. A canonical NBM thus represents a pure noise process in the learned environment, serving as the non-linear analogue of a standard Brownian motion.
\end{remark}

\section{Existence, Uniqueness, and Representation}
\label{sec:existence_representation}

We now establish conditions under which a canonical NBM exists and admits an SDE representation. The analysis rests on a structural assumption on the specialized driver $g_\theta$.

\begin{assumption}[Existence of a Unique Implicit Volatility Function]
\label{ass:implicit_vol}
Let the specialized driver $g_\theta: [0,T] \times \R \times \R \to \R$ be of class $C^1$. We say that $g_\theta$ admits a \textbf{unique implicit volatility function} if there exists a function $\nu_\theta: [0,T] \times \R \to (0, \infty)$ satisfying for each $(t,x) \in [0,T] \times \R$:
\begin{enumerate}[label=(\alph*)]
    \item \textbf{Unique Root:} $z = \nu_\theta(t,x)$ is the unique positive real solution to the algebraic equation $g_\theta(t,x,z) = 0$.
    \item \textbf{Regularity at the Root:} $\partial_z g_\theta(t,x,\nu_\theta(t,x)) \neq 0$. By the Implicit Function Theorem, this ensures $\nu_\theta(t,x)$ is of class $C^1$ and thus locally Lipschitz in $x$.
    \item \textbf{Global Linear Growth:} There exists a constant $C > 0$ such that $|\nu_\theta(t,x)| \le C(1+|x|)$ for all $(t,x)$.
\end{enumerate}
\end{assumption}

\begin{remark}[On the Role and Constructive Enforcement of \Cref{ass:implicit_vol}]
\label{rem:enforcing_assumption}
This assumption defines a tractable subclass of NBMs and is the foundation of our representation theorem. It is not a property of arbitrary neural network architectures but a structural constraint that can be constructively enforced, as shown in \Cref{thm:existence_of_theta}. For instance, designing $g_\theta(t,x,z)$ to be strictly monotonic in its third argument, $z$, on $\R^+$ ensures a unique root. This defines a class of models where the emergent volatility is guaranteed to be well-behaved, allowing for stable learning and analysis.
\end{remark}

\begin{theorem}[Representation of a Canonical Neural-Brownian Motion]
\label{thm:nbm_representation}
Let the driver $f_\theta$ satisfy the well-posedness conditions from \Cref{ass:wellposedness_multidim}, and let its specialization $g_\theta$ satisfy the structural conditions of \Cref{ass:implicit_vol}.

Then, a process $(M_t)_{t \ge 0}$ is a canonical Neural-Brownian Motion if and only if it is the unique strong solution to the stochastic differential equation:
\begin{equation}
\label{eq:nbm_sde}
    \dd M_t = \nu_\theta(t, M_t) \dd W_t, \quad M_0 = 0,
\end{equation}
where $\nu_\theta(t,x)$ is the implicitly defined volatility function from \Cref{ass:implicit_vol}.
\end{theorem}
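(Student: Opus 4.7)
The plan is to use \Cref{prop:drift_characterization_revised} as the principal bridge: it converts the abstract $\cEneutheta$-martingale axiom into the pointwise algebraic identity $b_t = -g_\theta(t,M_t,\sigma_t)$, and \Cref{ass:implicit_vol} then converts that identity into an explicit formula for $\sigma_t$. The canonical condition $b_t \equiv 0$ is the hinge: it collapses the drift identity to $g_\theta(t,M_t,\sigma_t) = 0$, whose only positive root is $\nu_\theta(t,M_t)$. Each direction of the ``iff'' is obtained by traversing this chain of equivalences in the appropriate order, with existence/uniqueness of the SDE supplied by classical strong-solution theory.

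For the sufficiency direction ($\Leftarrow$), I would first note that the Implicit Function Theorem under \Cref{ass:implicit_vol}(b) makes $\nu_\theta$ locally Lipschitz in $x$, while \Cref{ass:implicit_vol}(c) supplies linear growth. The standard strong existence/uniqueness theorem for SDEs (Karatzas--Shreve Thm.~5.2.9 or equivalent) then yields a unique non-exploding continuous solution $M$ to \eqref{eq:nbm_sde}, with $M_0 = 0$ and Itô decomposition $b_t = 0$, $\sigma_t = \nu_\theta(t,M_t)$. By the very definition of $\nu_\theta$, $g_\theta(t,M_t,\sigma_t) \equiv 0$, so the sufficiency clause of \Cref{prop:drift_characterization_revised} applies and delivers the $\cEneutheta$-martingale property. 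Together with $b_t \equiv 0$ under $\Prob$ and continuity, this verifies all axioms in \Cref{def:nbm_axioms} of a canonical NBM.

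For the necessity direction ($\Rightarrow$), suppose $M$ is a canonical NBM with decomposition $\dd M_t = b_t \dd t + \sigma_t \dd W_t$. The canonical hypothesis forces $b_t \equiv 0$, and the necessity clause of \Cref{prop:drift_characterization_revised} gives $g_\theta(t,M_t,\sigma_t) = 0$ for a.e.\ $t$. Invoking \Cref{ass:implicit_vol}(a), I would identify $\sigma_t = \nu_\theta(t,M_t)$, from which \eqref{eq:nbm_sde} follows. The ``unique'' clause of the statement is then the strong-uniqueness portion of the same classical SDE result used in the sufficiency half.

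The main obstacle will be the sign/measurability issue in identifying $\sigma_t$ with $\nu_\theta(t,M_t)$: \Cref{ass:implicit_vol}(a) asserts uniqueness only among \emph{positive} roots, while a generic diffusion coefficient need not be sign-definite. I would resolve this by appealing to the construction in \Cref{thm:existence_of_theta}, where $g_\theta$ is built strictly monotonic in $z$ and hence has a unique real (necessarily positive) root; alternatively, one can use the quadratic-variation identity $\dd\langle M\rangle_t = \sigma_t^2 \dd t = \nu_\theta(t,M_t)^2 \dd t$ with a measurable selection to fix the sign by convention, then verify the choice through the martingale representation against $W$. A secondary technical point is to confirm that $\sigma_t = \nu_\theta(t,M_t)$ lies in the BMO space required by the BSDE well-posedness that underlies \Cref{prop:drift_characterization_revised}; this should follow from the linear-growth bound in \Cref{ass:implicit_vol}(c) combined with standard moment estimates for the solution of \eqref{eq:nbm_sde}.
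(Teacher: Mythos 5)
Your proof follows exactly the paper's route: both directions are reduced to \Cref{prop:drift_characterization_revised}, the canonical condition $b_t\equiv 0$ collapses the drift identity to $g_\theta(t,M_t,\sigma_t)=0$, \Cref{ass:implicit_vol}(a) identifies $\sigma_t=\nu_\theta(t,M_t)$, and the SDE is well-posed via local Lipschitz (from the Implicit Function Theorem and \Cref{ass:implicit_vol}(b)) plus linear growth (\Cref{ass:implicit_vol}(c)). The two technical points you flag, the sign of $\sigma_t$ (since \Cref{ass:implicit_vol}(a) only guarantees a unique \emph{positive} root, while an Itô diffusion coefficient need not be sign-definite) and the membership of $(M,\sigma)$ in $\mathcal{S}^\infty\times\mathcal{H}^2_{\mathrm{BMO}}$ needed to invoke BSDE uniqueness, are genuine subtleties that the paper's proof passes over silently; they are worth a sentence in a careful write-up (e.g.\ fixing the sign convention $\sigma_t\ge 0$ by the choice of representing Brownian motion) but do not change the architecture of the argument.
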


\begin{proof}
Before proceeding with the two directions of the proof, we first establish that the stochastic differential equation \eqref{eq:nbm_sde} is well-posed. By \Cref{ass:implicit_vol}(b), the function $\nu_\theta(t,x)$ is locally Lipschitz in its spatial variable $x$, as it is of class $C^1$. Furthermore, \Cref{ass:implicit_vol}(c) provides the linear growth condition $|\nu_\theta(t,x)| \le C(1+|x|)$. Standard SDE theory (see, e.g., Karatzas \& Shreve, Thm. 5.2.9) guarantees the existence of a unique strong solution to \Cref{eq:nbm_sde} that is pathwise continuous and does not explode in finite time. We now prove the claimed equivalence.

\textbf{($\implies$) Necessity.} Assume that $(M_t)_{t \ge 0}$ is a canonical Neural-Brownian Motion. We must show that it satisfies the SDE \eqref{eq:nbm_sde}.

By definition, $M_t$ is an Itô process. Let its general Itô decomposition be given by:
\[ \dd M_t = b_t \dd t + \sigma_t \dd W_t. \]
We analyze the implications of the two key properties from its definition (\Cref{def:nbm_axioms}):

\begin{enumerate}
    \item \textbf{The canonical property:} A canonical NBM is defined to have zero drift with respect to the physical measure $\Prob$. This directly implies that the drift process $b_t$ in its Itô decomposition must be zero for almost every $t \in [0,T]$.
    \begin{equation} \label{eq:proof_drift_is_zero}
        b_t = 0.
    \end{equation}

    \item \textbf{The $\cEneutheta$-martingale property:} By \Cref{prop:drift_characterization_revised}, a process is an $\cEneutheta$-martingale if and only if its drift $b_t$ and diffusion $\sigma_t$ satisfy the algebraic identity:
    \begin{equation} \label{eq:proof_drift_is_g}
        b_t = -g_\theta(t, M_t, \sigma_t).
    \end{equation}
\end{enumerate}

Equating the expressions for the drift from \eqref{eq:proof_drift_is_zero} and \eqref{eq:proof_drift_is_g}, we obtain the fundamental constraint that must be satisfied by the process's volatility:
\[ 0 = -g_\theta(t, M_t, \sigma_t) \quad \iff \quad g_\theta(t, M_t, \sigma_t) = 0. \]
We now invoke our key structural assumption on the driver. By \Cref{ass:implicit_vol}(a), for any fixed pair $(t,x)$, the equation $g_\theta(t,x,z) = 0$ admits a unique positive solution for $z$, which is given by the function $\nu_\theta(t,x)$. Therefore, the volatility process $\sigma_t$ must be given by $\sigma_t = \nu_\theta(t, M_t)$.

Substituting our findings for both the drift ($b_t=0$) and the diffusion ($\sigma_t=\nu_\theta(t,M_t)$) back into the general Itô decomposition for $M_t$, we find that its dynamics are:
\[ \dd M_t = 0 \cdot \dd t + \nu_\theta(t, M_t) \dd W_t. \]
The initial condition $M_0=0$ is given by Axiom (i) of \Cref{def:nbm_axioms}. Thus, $M_t$ must be a solution to the SDE \eqref{eq:nbm_sde}, which proves the necessity.

\textbf{($\impliedby$) Sufficiency.} Conversely, assume that $(M_t)_{t \ge 0}$ is the unique strong solution to the SDE \eqref{eq:nbm_sde}. We must verify that this process satisfies all three axioms of a canonical Neural-Brownian Motion as laid out in \Cref{def:nbm_axioms}.

\begin{itemize}
    \item[\textbf{(i) Initial Value:}] The SDE is defined with the initial condition $M_0=0$. This axiom is satisfied by construction.

    \item[\textbf{(ii) Continuity:}] As established in the preamble of this proof, the existence of a strong solution to an SDE with locally Lipschitz and linear growth coefficients guarantees that its sample paths are continuous for $\Prob$-almost every $\omega$. This axiom is satisfied.

    \item[\textbf{(iii) $\cEneutheta$-Martingale Property:}] To verify this property, we again use the powerful characterization in \Cref{prop:drift_characterization_revised}. We must show that the drift of $M_t$, which we denote $b_t$, satisfies the identity $b_t = -g_\theta(t, M_t, \sigma_t)$, where $\sigma_t$ is the diffusion coefficient of $M_t$.
    
    From the defining SDE \eqref{eq:nbm_sde}, we can directly identify the drift and diffusion coefficients of $M_t$:
    \[ b_t = 0 \quad \text{and} \quad \sigma_t = \nu_\theta(t, M_t). \]
    We must therefore check if the following identity holds:
    \[ 0 \stackrel{?}{=} -g_\theta(t, M_t, \nu_\theta(t, M_t)). \]
    This is equivalent to verifying that $g_\theta(t, M_t, \nu_\theta(t, M_t)) = 0$. This identity is true by the very definition of the function $\nu_\theta$ in \Cref{ass:implicit_vol}(a), which defines $\nu_\theta(t,x)$ as the unique positive root of the algebraic equation $g_\theta(t,x,z)=0$ for each $(t,x)$. Thus, the $\cEneutheta$-martingale property is satisfied.
\end{itemize}

Finally, we must check the \textbf{canonical property}. This requires that the drift of the process under the physical measure $\Prob$ is zero. As identified above from the SDE \eqref{eq:nbm_sde}, the drift $b_t$ is indeed zero.

Since the process $M_t$ defined by the SDE satisfies all the necessary axioms, it is a canonical Neural-Brownian Motion. The uniqueness of the canonical NBM (under the given assumptions) follows directly from the uniqueness of the strong solution to the SDE \eqref{eq:nbm_sde}. This completes the proof of sufficiency.
\end{proof}

\section{Stochastic Calculus for Neural-Brownian Motion}

\subsection{Infinitesimal Generator of a Canonical NBM}

\begin{theorem}[Infinitesimal Generator of a Canonical NBM]
\label{thm:nbm_generator}
Let $(M_t)_{t \ge 0}$ be a canonical NBM satisfying the conditions of \Cref{thm:nbm_representation}. Let $u: [0,T] \times \R \to \R$ be a function of class $C^{1,2}([0,T] \times \R)$, i.e., continuously differentiable in time and twice continuously differentiable in space. The process $u(t, M_t)$ has the following dynamics:
\[ \dd u(t, M_t) = \Lcaltheta u(t, M_t) \dd t + \partial_x u(t, M_t) \nu_\theta(t, M_t) \dd W_t, \]
where $\Lcaltheta$ is the \textbf{NBM Infinitesimal Generator}, defined by its action on $u$ as:
\begin{equation}
\label{eq:neural_generator}
    \Lcaltheta u(t,x) \coloneqq \left( \partial_t + \frac{1}{2} \nu_\theta(t,x)^2 \partial_{xx} \right) u(t,x).
\end{equation}
\end{theorem}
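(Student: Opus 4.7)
The plan is to derive the stated decomposition as a direct consequence of Itô's formula applied to the SDE representation of the canonical NBM guaranteed by \Cref{thm:nbm_representation}, and then to read off the generator from the finite-variation part.

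First I would invoke \Cref{thm:nbm_representation} to replace the axiomatic description of $M_t$ with its concrete SDE realization $\dd M_t = \nu_\theta(t, M_t)\, \dd W_t$. This identifies $M$ as a continuous semimartingale with drift $b_t \equiv 0$ and diffusion coefficient $\sigma_t = \nu_\theta(t, M_t)$, and hence gives the quadratic variation $\dd\langle M\rangle_t = \nu_\theta(t, M_t)^2\, \dd t$ by the Itô isometry at the level of local martingales. The regularity required for this step, namely local Lipschitz continuity and linear growth of $\nu_\theta$, has already been established via \Cref{ass:implicit_vol}(b)--(c) and is used inside the cited representation theorem.

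Next I would apply the classical Itô formula to $u(t, M_t)$, which is permissible since $u \in C^{1,2}([0,T]\times\R)$ and $M$ is a continuous semimartingale. This yields
\[
\dd u(t, M_t) = \partial_t u(t, M_t)\, \dd t + \partial_x u(t, M_t)\, \dd M_t + \tfrac{1}{2} \partial_{xx} u(t, M_t)\, \dd\langle M\rangle_t.
\]
Substituting $\dd M_t = \nu_\theta(t, M_t)\, \dd W_t$ and $\dd\langle M\rangle_t = \nu_\theta(t, M_t)^2\, \dd t$, and collecting the finite-variation terms, gives the drift part $\bigl(\partial_t u + \tfrac{1}{2}\nu_\theta(t, M_t)^2 \partial_{xx} u\bigr)\, \dd t$, which is precisely $\Lcaltheta u(t, M_t)\, \dd t$ by \eqref{eq:neural_generator}. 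The remaining martingale term is $\partial_x u(t, M_t)\, \nu_\theta(t, M_t)\, \dd W_t$, matching the statement.

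There is no substantial obstacle to surmount here; the result is essentially a corollary of the SDE representation combined with Itô's formula. The only point requiring any care is verifying the integrability of the stochastic integrand $\partial_x u(t, M_t)\, \nu_\theta(t, M_t)$ so that the decomposition is more than a local semimartingale identity; however, for the purposes of identifying the generator this is immaterial, since the decomposition holds pathwise as an equality of Itô differentials, and under the standing linear growth on $\nu_\theta$ together with a local boundedness assumption on $\partial_x u$ on bounded sets, the stochastic integral is at worst a continuous local martingale, which is all that is required to identify $\Lcaltheta$ as the generator.
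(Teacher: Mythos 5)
Your proposal is correct and follows essentially the same route as the paper: invoke \Cref{thm:nbm_representation} to obtain the SDE $\dd M_t = \nu_\theta(t,M_t)\,\dd W_t$, apply the classical Itô formula with $\dd\langle M\rangle_t = \nu_\theta(t,M_t)^2\,\dd t$, and read off $\Lcaltheta$ from the finite-variation part. Your closing remark on integrability and local-martingale status of the stochastic integral is a reasonable extra precaution that the paper's proof leaves implicit, but it does not change the argument.
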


\begin{proof}
The proof is a direct application of Itô's formula for a time-dependent function of a continuous semimartingale. We proceed by identifying the components of the process $M_t$ and systematically applying the formula.

\textbf{Step 1: Recall the SDE of the Canonical NBM.}
By \Cref{thm:nbm_representation}, a canonical Neural-Brownian Motion $(M_t)_{t \ge 0}$ is the unique strong solution to the stochastic differential equation:
\begin{equation} \label{eq:proof_nbm_sde}
    \dd M_t = \nu_\theta(t, M_t) \dd W_t, \quad M_0 = 0.
\end{equation}
This representation shows that $M_t$ is a continuous Itô process. From its differential form, we can identify its drift and diffusion coefficients:
\begin{itemize}
    \item The drift coefficient is $b(t, M_t) = 0$.
    \item The diffusion coefficient is $\sigma(t, M_t) = \nu_\theta(t, M_t)$.
\end{itemize}

\textbf{Step 2: State the General Itô Formula.}
Let $X_t$ be an Itô process with dynamics $\dd X_t = b_t \dd t + \sigma_t \dd W_t$. For a function $u(t,x) \in C^{1,2}([0,T] \times \R)$, Itô's formula gives the dynamics of the process $u(t, X_t)$ as:
\begin{equation} \label{eq:proof_ito_general}
    \dd u(t, X_t) = \frac{\partial u}{\partial t}(t,X_t) \dd t + \frac{\partial u}{\partial x}(t,X_t) \dd X_t + \frac{1}{2}\frac{\partial^2 u}{\partial x^2}(t,X_t) \dd\langle X, X \rangle_t.
\end{equation}
Here, $\dd\langle X, X \rangle_t$ denotes the differential of the quadratic variation of the process $X_t$.

\textbf{Step 3: Apply the Formula to the Canonical NBM $M_t$.}
We apply \Cref{eq:proof_ito_general} by setting $X_t = M_t$. First, we compute the quadratic variation of $M_t$. For a continuous Itô process defined by $\dd M_t = b_t \dd t + \sigma_t \dd W_t$, the quadratic variation is given by $\dd\langle M, M \rangle_t = \sigma_t^2 \dd t$. Using the diffusion coefficient identified in Step 1, we have:
\begin{equation} \label{eq:proof_qv}
    \dd\langle M, M \rangle_t = (\nu_\theta(t, M_t))^2 \dd t.
\end{equation}
Now, we substitute the differential of $M_t$ from \Cref{eq:proof_nbm_sde} and its quadratic variation from \Cref{eq:proof_qv} into the general Itô formula \Cref{eq:proof_ito_general}. To enhance clarity, we will write out the partial derivatives in full:
\begin{align*}
    \dd u(t, M_t) &= \frac{\partial u}{\partial t}(t,M_t) \dd t + \frac{\partial u}{\partial x}(t,M_t) \left( \nu_\theta(t, M_t) \dd W_t \right) + \frac{1}{2}\frac{\partial^2 u}{\partial x^2}(t,M_t) \left( \nu_\theta(t, M_t)^2 \dd t \right) \\
    &= \frac{\partial u}{\partial t}(t,M_t) \dd t + \frac{1}{2}\nu_\theta(t,M_t)^2 \frac{\partial^2 u}{\partial x^2}(t,M_t) \dd t + \frac{\partial u}{\partial x}(t,M_t) \nu_\theta(t,M_t) \dd W_t.
\end{align*}

\textbf{Step 4: Identify the Generator and Conclude.}
We now group the terms multiplying the integrator $\dd t$ (the finite-variation or drift part) and the terms multiplying the integrator $\dd W_t$ (the martingale part):
\[
    \dd u(t, M_t) = \underbrace{\left( \frac{\partial u}{\partial t}(t,M_t) + \frac{1}{2}\nu_\theta(t,M_t)^2 \frac{\partial^2 u}{\partial x^2}(t,M_t) \right)}_{\text{Drift Term}} \dd t + \underbrace{\left( \frac{\partial u}{\partial x}(t,M_t) \nu_\theta(t,M_t) \right)}_{\text{Martingale Term}} \dd W_t.
\]
The infinitesimal generator of a Markov process is the operator that describes the expected rate of change of a smooth function of the process. This corresponds precisely to the drift term of the transformed process $u(t, M_t)$. We therefore define the operator $\Lcaltheta$ by its action on a sufficiently regular function $u(t,x)$:
\[
    \Lcaltheta u(t,x) \coloneqq \frac{\partial u}{\partial t}(t,x) + \frac{1}{2}\nu_\theta(t,x)^2 \frac{\partial^2 u}{\partial x^2}(t,x).
\]
Using the more compact notation $\partial_t u$ and $\partial_{xx} u$, this is exactly the operator given in \Cref{eq:neural_generator}.

Substituting this definition back into our expression for $\dd u(t, M_t)$ yields the final result:
\[
    \dd u(t, M_t) = \Lcaltheta u(t, M_t) \dd t + \partial_x u(t, M_t) \nu_\theta(t, M_t) \dd W_t.
\]
This completes the proof.
\end{proof}

\subsection{Interpretation via a Learned Change of Measure for Quadratic Drivers}
\label{sec:girsanov_interpretation}
We analyze the special case of a simple quadratic specialized driver to provide a transparent interpretation of the NBM framework. The choice of the Girsanov kernel in this setting is not arbitrary; it is motivated by the general theory of quadratic BSDEs, where the change of measure that locally linearizes the dynamics is related to the gradient of the driver with respect to the $z$ variable. For a driver $g(z)$, this suggests a kernel related to $\partial_z g(z)$. For $g(t,x,z) = \frac{\alpha}{2}z^2 - \beta$, this derivative is $\alpha z$. Evaluated at the root $z=\nu_\theta$, this motivates the kernel $\gamma_t = \alpha\nu_\theta$.

\begin{theorem}[Girsanov-Type Theorem for Quadratic NBMs]
\label{thm:neural_girsanov_revised}
Consider a specialized driver of the form $g_\theta(t,x,z) = \frac{\alpha}{2}z^2 - \beta$, where the parameters $\theta=(\alpha, \beta)$ are constants satisfying $\alpha \neq 0$ and $\beta/\alpha > 0$.
Let $(M_t)_{t\ge 0}$ be the corresponding canonical Neural-Brownian Motion.
Define a new measure $\Qbb_\alpha$, equivalent to $\Prob$, via the Radon-Nikodym density process generated by the Girsanov kernel $\gamma_t \coloneqq \alpha \nu_\theta$, where $\nu_\theta = \sqrt{2\beta/\alpha}$.

Then, under the measure $\Qbb_\alpha$, the process $(M_t)_{t\ge 0}$ is a semimartingale with the dynamics:
\begin{equation}
\label{eq:nbm_drift_representation_revised}
\dd M_t = \alpha \nu_\theta^2 \dd t + \nu_\theta \dd W^{\alpha}_t,
\end{equation}
where $W^{\alpha}_t$ is a standard Brownian motion under $\Qbb_\alpha$.
\end{theorem}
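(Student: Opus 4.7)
The plan is to reduce the claim to a straightforward application of Girsanov's theorem, with the only real work being to first extract the explicit SDE for $M_t$ under $\Prob$ from the representation theorem and then to justify that the proposed density process is a true martingale.

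First I would specialize the results of \Cref{sec:existence_representation} to the driver $g_\theta(t,x,z) = \tfrac{\alpha}{2}z^2 - \beta$. Solving $g_\theta(t,x,z) = 0$ for $z > 0$ under the hypothesis $\beta/\alpha > 0$ yields the unique positive root $\nu_\theta = \sqrt{2\beta/\alpha}$, which is a positive constant independent of $(t,x)$. It is straightforward to check that this $\nu_\theta$ satisfies \Cref{ass:implicit_vol} (unique root, $\partial_z g_\theta(\cdot,\nu_\theta) = \alpha\nu_\theta \neq 0$, trivial linear growth), so \Cref{thm:nbm_representation} applies and gives
\[
    \dd M_t = \nu_\theta \, \dd W_t, \qquad M_0 = 0,
\]
under the physical measure $\Prob$.

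Next I would introduce the candidate density process. Set $\gamma_t \coloneqq \alpha\nu_\theta$ and define
\[
    Z_t \coloneqq \exp\!\left(\int_0^t \gamma_s \, \dd W_s - \tfrac{1}{2} \int_0^t \gamma_s^2 \, \dd s\right) = \exp\!\left(\alpha\nu_\theta W_t - \tfrac{1}{2}\alpha^2\nu_\theta^2 t\right).
\]
Because $\gamma$ is a deterministic constant, Novikov's criterion is satisfied trivially: $\E\bigl[\exp(\tfrac{1}{2}\alpha^2\nu_\theta^2 T)\bigr] < \infty$. Hence $Z$ is a true $\Prob$-martingale on $[0,T]$, the equivalent measure $\Qbb_\alpha$ defined by $\dd \Qbb_\alpha / \dd \Prob = Z_T$ is well defined, and by Girsanov's theorem the process
\[
    W^{\alpha}_t \coloneqq W_t - \int_0^t \gamma_s \, \dd s = W_t - \alpha\nu_\theta t
\]
is a standard Brownian motion under $\Qbb_\alpha$.

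Finally I would substitute $\dd W_t = \dd W^\alpha_t + \alpha\nu_\theta \, \dd t$ into the SDE for $M_t$ obtained in the first step, yielding
\[
    \dd M_t = \nu_\theta \, \dd W_t = \nu_\theta \bigl(\dd W^\alpha_t + \alpha\nu_\theta \, \dd t\bigr) = \alpha\nu_\theta^2 \, \dd t + \nu_\theta \, \dd W^\alpha_t,
\]
which is exactly \eqref{eq:nbm_drift_representation_revised}. There is no genuine obstacle in this argument; the only point that deserves care is the justification of Novikov's condition, and this is immediate here because the kernel $\gamma_t = \alpha\nu_\theta$ is a bounded constant. The more conceptual content of the statement, namely the motivation for choosing $\gamma_t = \alpha\nu_\theta$ as the linearization kernel associated with the quadratic driver, has already been outlined in the preamble to the theorem and need not reappear in the proof itself.
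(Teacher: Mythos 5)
Your proof is correct and follows essentially the same three-step route as the paper's: derive the constant-volatility SDE $\dd M_t = \nu_\theta\,\dd W_t$ under $\Prob$, verify Novikov for the constant kernel $\gamma_t = \alpha\nu_\theta$, and apply Girsanov to shift the drift to $\alpha\nu_\theta^2$. The only cosmetic difference is that you obtain the $\Prob$-dynamics by explicitly verifying \Cref{ass:implicit_vol} and invoking \Cref{thm:nbm_representation}, whereas the paper re-derives the constraint $g_\theta(t,M_t,\sigma_t)=0$ inline from \Cref{prop:drift_characterization_revised} and the zero-drift canonical condition; these are equivalent and your citation of the already-established theorem is, if anything, slightly tidier.
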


\begin{proof}
The proof proceeds in three steps. First, we determine the dynamics of the canonical NBM under the physical measure $\Prob$. Second, we rigorously define the change of measure to $\Qbb_\alpha$. Finally, we apply Girsanov's theorem to derive the dynamics of the process under $\Qbb_\alpha$.

\noindent\textbf{1. Dynamics under the Physical Measure $\Prob$.}\\
By \Cref{def:nbm_axioms}, a canonical Neural-Brownian Motion $(M_t)_{t \ge 0}$ is an Itô process, $\dd M_t = b_t \dd t + \sigma_t \dd W_t$, that satisfies two key properties:
\begin{enumerate}[label=(\alph*)]
    \item It is a canonical NBM, meaning its drift under the physical measure $\Prob$ is zero. Thus, $b_t = 0$ for almost every $t \in [0,T]$.
    \item It is an $\cEneutheta$-martingale. By the drift characterization in \Cref{prop:drift_characterization_revised}, this implies its drift must satisfy the algebraic identity $b_t = -g_\theta(t, M_t, \sigma_t)$.
\end{enumerate}
Equating these two conditions on the drift gives the fundamental constraint on the process's volatility $\sigma_t$:
\[
    g_\theta(t, M_t, \sigma_t) = 0.
\]
For the specified quadratic driver, this becomes:
\[
    \frac{\alpha}{2}\sigma_t^2 - \beta = 0.
\]
For this equation to have a real, non-zero solution for $\sigma_t$, we require $\beta/\alpha > 0$, which is given as a hypothesis of the theorem. The unique positive solution for the volatility is a constant:
\[
    \sigma_t = \sqrt{\frac{2\beta}{\alpha}} \eqqcolon \nu_\theta.
\]
Therefore, under the measure $\Prob$, the canonical NBM $(M_t)_{t\ge 0}$ is the unique strong solution to the stochastic differential equation:
\[
    \dd M_t = \nu_\theta \dd W_t, \quad M_0 = 0.
\]
This shows that for this simple driver, the NBM is a scaled Brownian motion under $\Prob$.

\noindent\textbf{2. The Change of Measure.} \\
We define a new probability measure $\Qbb_\alpha$ that is equivalent to $\Prob$. The proposed Girsanov kernel is the constant process $\gamma_t = \alpha \nu_\theta$. The Radon-Nikodym density process $(L_t)_{t \in [0,T]}$ is given by the stochastic exponential (Doléans-Dade exponential) of $\gamma \cdot W$:
\[
    L_t = \frac{\dd\Qbb_\alpha}{\dd\Prob}\bigg|_{\Fcal_t} = \mathcal{E}\left(\int \gamma_s \dd W_s\right)_t = \exp\left( \int_0^t \gamma_s \dd W_s - \frac{1}{2} \int_0^t \gamma_s^2 \dd s \right).
\]
For $L_t$ to be a uniformly integrable martingale, ensuring $\E[L_T]=1$ and thus that $\Qbb_\alpha$ is a valid probability measure, Novikov's condition must be satisfied:
\[
    \E\left[\exp\left(\frac{1}{2}\int_0^T \gamma_s^2 \dd s\right)\right] < \infty.
\]
In our case, the kernel $\gamma_s = \alpha \nu_\theta$ is a constant. The integral is deterministic:
\[
    \int_0^T \gamma_s^2 \dd s = \int_0^T (\alpha \nu_\theta)^2 \dd s = (\alpha \nu_\theta)^2 T.
\]
Novikov's condition becomes $\exp\left(\frac{1}{2}(\alpha \nu_\theta)^2 T\right) < \infty$, which is trivially satisfied since $\alpha, \nu_\theta,$ and $T$ are finite. Thus, $\Qbb_\alpha$ is a well-defined probability measure equivalent to $\Prob$.

\noindent\textbf{3. Dynamics under the New Measure $\Qbb_\alpha$.}\\
Girsanov's theorem provides a two-fold result. First, the process $W^{\alpha}_t$ defined by
\[
    W^{\alpha}_t \coloneqq W_t - \int_0^t \gamma_s \dd s = W_t - (\alpha \nu_\theta) t
\]
is a standard Brownian motion under the measure $\Qbb_\alpha$. Second, if a process $X_t$ has dynamics $\dd X_t = b_t \dd t + \sigma_t \dd W_t$ under $\Prob$, its dynamics under $\Qbb_\alpha$ are given by
\[
    \dd X_t = \left( b_t + \sigma_t \gamma_t \right) \dd t + \sigma_t \dd W^{\alpha}_t.
\]
We apply this transformation to our canonical NBM, $M_t$. From Step 1, we have the $\Prob$-dynamics components:
\[
    b_t = 0 \quad \text{and} \quad \sigma_t = \nu_\theta.
\]
The Girsanov kernel is $\gamma_t = \alpha \nu_\theta$. The drift of $M_t$ under $\Qbb_\alpha$, which we denote $b^{\alpha}_t$, is therefore:
\[
    b^{\alpha}_t = b_t + \sigma_t \gamma_t = 0 + (\nu_\theta)(\alpha \nu_\theta) = \alpha \nu_\theta^2.
\]
The diffusion part remains unchanged in magnitude but is now driven by the $\Qbb_\alpha$-Brownian motion $W^{\alpha}_t$. Substituting the new drift and the diffusion term, we obtain the dynamics of $M_t$ under $\Qbb_\alpha$:
\[
    \dd M_t = \alpha \nu_\theta^2 \dd t + \nu_\theta \dd W^{\alpha}_t.
\]
This completes the proof.
\end{proof}

\begin{remark}[Interpretation: Learning the Attitude Towards Ambiguity]
\label{rem:learned_attitude}
\Cref{thm:neural_girsanov_revised} reveals that the parameter $\alpha$, which represents the convexity of the driver and can be learned from data, determines the system's attitude towards uncertainty. A process that is a pure martingale under $\Prob$ acquires a non-zero drift under the measure $\Qbb_\alpha$ intrinsically associated with the non-linear expectation.
\begin{itemize}[leftmargin=*]
    \item \textbf{Learned Pessimism ($\alpha > 0$):} A convex driver implies ambiguity aversion. The acquired drift is positive.
    \item \textbf{Learned Optimism ($\alpha < 0$):} A concave driver implies ambiguity-seeking behavior. The acquired drift is negative.
\end{itemize}
Crucially, the NBM framework does not impose an attitude; it provides a data-driven mechanism for discovering it. This interpretation becomes more complex but conceptually remains for state-dependent drivers $g_\theta(t,x,z) = \frac{\alpha_\theta(t,x)}{2}z^2 - \beta_\theta(t,x)$. In that case, the Girsanov kernel becomes a stochastic process $\gamma_t = \alpha_\theta(t, M_t) \nu_\theta(t, M_t)$, and the acquired drift $\alpha_\theta(t,M_t)\nu_\theta(t,M_t)^2$ becomes state-dependent, reflecting a locally varying attitude toward risk.
\end{remark}

\section{Collective Behavior: A Mean-Field Analysis}
\label{sec:mean_field}

We now investigate the macroscopic behavior of large systems of interacting Neural-Brownian Motions. This leads to a study of propagation of chaos, where the collective dynamics are described by a non-linear stochastic differential equation of McKean-Vlasov type, and the evolution of the system's law is governed by a non-linear partial differential equation. This program follows the now-classical approach outlined in \cite{Sznitman1991} and developed extensively in \cite{CarmonaDelarue2018}.

This section should be viewed as outlining a research program. The results herein are conditional on a very strong set of regularity assumptions on the implicitly-defined volatility function. Proving these assumptions from more primitive conditions on the neural driver $g_\theta$ is a major open problem for future research.

\subsection{The Interacting Canonical NBM System}

We consider a system of $N$ interacting particles, where the volatility of each particle depends on its own state and on the empirical distribution of the entire system. Let $\Pcal_2(\R)$ denote the space of probability measures on $\R$ with a finite second moment, endowed with the 2-Wasserstein metric $W_2$. The driver $g_\theta$ is now a function of the measure, $g_\theta: [0,T] \times \R \times \Pcal_2(\R) \times \R \to \R$.

\begin{definition}[Interacting Canonical NBM System]
Let $(W^i)_{i=1}^N$ be independent $d$-dimensional Brownian motions. A system of $k$-dimensional processes $(M^{i,N}_t)_{i=1}^N$ is an \textbf{interacting canonical NBM system} if, for each $i \in \{1, \dots, N\}$, it is the unique strong solution to:
\begin{equation}
\label{eq:interacting_nbm_sde_revised}
    \dd M^{i,N}_t = \nu_\theta(t, M^{i,N}_t, \mu^N_t) \dd W^i_t, \quad M^{i,N}_0 = m^i_0,
\end{equation}
where $\mu^N_t \coloneqq \frac{1}{N}\sum_{j=1}^N \delta_{M^{j,N}_t}$ is the empirical measure of the system, and the function $\nu_\theta(t, x, \mu)$ is implicitly defined as the unique root of the algebraic equation $g_\theta(t, x, \mu, \nu_\theta(t, x, \mu)) = 0$. For clarity, we will proceed with the one-dimensional case ($k=d=1$).
\end{definition}

Our goal is to formally show that as $N \to \infty$, the system exhibits propagation of chaos. The following analysis relies critically on strong regularity conditions on the implicitly defined volatility function $\nu_\theta$.

\begin{assumption}[Lipschitz Regularity of the Implicit Volatility]
\label{ass:mckean_lipschitz_revised}
The implicit volatility function $\nu_\theta: [0,T] \times \R \times \Pcal_2(\R) \to \R$ is measurable and satisfies the following conditions for some constants $L_x, L_\mu, C > 0$:
\begin{enumerate}[label=(\roman*)]
    \item \textbf{Lipschitz continuity in state:} For any $t \in [0,T]$, $x, y \in \R$, and $\mu \in \Pcal_2(\R)$:
    \[ \abs{\nu_\theta(t, x, \mu) - \nu_\theta(t, y, \mu)} \le L_x \abs{x - y}. \]
    \item \textbf{Lipschitz continuity in measure:} For any $t \in [0,T]$, $x \in \R$, and $\mu, \pi \in \Pcal_2(\R)$:
    \[ \abs{\nu_\theta(t, x, \mu) - \nu_\theta(t, x, \pi)} \le L_\mu W_2(\mu, \pi). \]
    \item \textbf{Linear growth:} For any $(t, x, \mu) \in [0,T] \times \R \times \Pcal_2(\R)$:
    \[ \abs{\nu_\theta(t, x, \mu)}^2 \le C^2(1 + \abs{x}^2 + W_2(\mu, \delta_0)^2), \]
    where $\delta_0$ is the Dirac measure at the origin.
\end{enumerate}
\end{assumption}

\begin{remark}[On the Strength of \Cref{ass:mckean_lipschitz_revised}]
\label{rem:mckean_lipschitz_revised}
This is a powerful assumption that does not follow easily from the primitive assumptions on the neural driver $g_\theta$. While \Cref{thm:existence_of_theta} shows that the implicit volatility structure can be enforced by architectural design, proving that this also yields Lipschitz continuity with respect to the measure argument (in the Wasserstein metric) is a significant technical challenge. It would require an application of an implicit function theorem in an infinite-dimensional (Banach space) setting, demanding stringent control over the Fréchet derivative of $g_\theta$ with respect to its measure argument. Proving that there exist neural network architectures for $g_\theta$ that guarantee \Cref{ass:mckean_lipschitz_revised} is an important open problem. The subsequent results in this section are conditional upon this assumption holding.
\end{remark}

\begin{theorem}[Existence of Well-Behaved Mean-Field Neural Drivers]
\label{thm:mckean_driver_existence}
The class of neural drivers $g_\theta$ for which the implicitly defined volatility function $\nu_\theta(t,x,\mu)$ satisfies the Lipschitz regularity and growth conditions of \Cref{ass:mckean_lipschitz_revised} is non-empty.
\end{theorem}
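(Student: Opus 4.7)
The plan is constructive, extending the architectural design used in \Cref{thm:existence_of_theta} so that the infinite-dimensional measure argument enters only through a finite-dimensional, $W_2$-Lipschitz feature map. This sidesteps the need for a Banach-space implicit function theorem, which is precisely the difficulty flagged in \Cref{rem:mckean_lipschitz_revised}, and reduces the entire analysis to the classical finite-dimensional implicit function theorem.

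First, I would fix an integer $m \ge 1$ together with bounded Lipschitz test functions $\phi_1,\dots,\phi_m:\R\to\R$, and define the feature map $F:\Pcal_2(\R)\to\R^m$ by $F(\mu)_i \coloneqq \int \phi_i\,\dd\mu$. By Kantorovich--Rubinstein duality for $W_1$ together with the domination $W_1 \le W_2$, each coordinate of $F$ is globally $W_2$-Lipschitz, with an explicit constant controlled by the Lipschitz norms of the $\phi_i$. I would then take the specialized driver to have the additive form
\[
    g_\theta(t,x,\mu,z) \;\coloneqq\; c\,z \;+\; r_\theta\bigl(t,x,F(\mu),z\bigr) \;-\; h_\theta\bigl(t,x,F(\mu)\bigr),
\]
where $c > 0$ is a fixed constant, $r_\theta$ is a feedforward network that is non-decreasing in $z$ (enforced by positive weights on the $z$-path and monotone activations, exactly as in \Cref{thm:existence_of_theta}) and globally Lipschitz in $(x,F)$ uniformly in $(t,z)$ (enforced by bounded weights and bounded Lipschitz activations), and $h_\theta$ is a Lipschitz shift network that guarantees the existence of a positive root.

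Second, I would derive the implicit volatility and its regularity. By construction $\partial_z g_\theta \ge c > 0$ uniformly in $(t,x,\mu,z)$, so the classical implicit function theorem on $\R^m \times \R$ yields a map $\tilde\nu_\theta:[0,T]\times\R\times\R^m\to(0,\infty)$ of class $C^1$ satisfying the derivative identities
\[
    \partial_x \tilde\nu_\theta \;=\; -\frac{\partial_x \tilde g_\theta}{\partial_z \tilde g_\theta}, \qquad \nabla_F \tilde\nu_\theta \;=\; -\frac{\nabla_F \tilde g_\theta}{\partial_z \tilde g_\theta},
\]
where $\tilde g_\theta(t,x,F,z) = c z + r_\theta(t,x,F,z) - h_\theta(t,x,F)$. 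The uniform lower bound on $\partial_z \tilde g_\theta$ together with the bounded-Lipschitz architecture of $(r_\theta, h_\theta)$ gives global bounds on these derivatives, hence global Lipschitz constants $\tilde L_x, \tilde L_F$ for $\tilde\nu_\theta$. Setting $\nu_\theta(t,x,\mu) \coloneqq \tilde\nu_\theta(t,x,F(\mu))$, the Lipschitz estimate in $x$ is inherited directly, while Lipschitz continuity in $\mu$ with respect to $W_2$ follows from the chain $\abs{\nu_\theta(t,x,\mu) - \nu_\theta(t,x,\pi)} \le \tilde L_F \abs{F(\mu)-F(\pi)} \le \tilde L_F \mathrm{Lip}(F) \, W_2(\mu,\pi)$. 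Linear growth in $x$ follows from the corresponding growth of $\tilde\nu_\theta$ combined with the boundedness of $F(\mu)$ (since the $\phi_i$ are bounded), which in particular is $O(1 + W_2(\mu,\delta_0))$.

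The main obstacle is reconciling the two competing requirements on the inner network $r_\theta$: \emph{uniform} monotonicity in $z$ (needed so that the implicit function theorem yields a global inverse with a derivative that is bounded rather than merely finite) and global Lipschitz control in $(x, F)$ (needed for the mean-field Lipschitz constants). The additive skip term $c z$ is the key device for resolving this tension: it alone carries the burden of enforcing the uniform lower bound $\partial_z g_\theta \ge c$, which frees the nonlinear residual $r_\theta$ from that constraint and permits the use of standard bounded-Lipschitz architectures for the $(x,F)$-dependence. A secondary subtlety, handled as in \Cref{thm:existence_of_theta}, is that the full driver $f_\theta$ must still satisfy the mean-field analogue of \Cref{ass:wellposedness_multidim}; this is achieved by adjoining a monotonicity-restoring term $-\mu_0\, y$ (with $\mu_0 \ge 0$ a fixed constant) that does not interact with the measure argument and therefore does not perturb any of the Lipschitz estimates above.
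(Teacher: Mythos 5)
Your construction is correct and follows the same high-level strategy as the paper — factor the measure dependence through an integral against Lipschitz test functions, enforce uniform monotonicity in $z$ by architecture, invert via the implicit function theorem, and chain the Lipschitz bounds — but it differs in the machinery at two points. First, for Lipschitz continuity in the measure the paper works directly with a $W_2$-optimal coupling and Cauchy--Schwarz (using $\E[|\phi(x,Y_1)-\phi(x,Y_2)|] \le L_{\phi,y}\,\E[|Y_1-Y_2|]$), whereas you invoke Kantorovich--Rubinstein duality for $W_1$ and dominate by $W_2$; both are standard and correct, with the paper's coupling argument being arguably more native to the $W_2$ setting used in the rest of the mean-field section. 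Second, the paper uses a single \emph{learnable} integrand $\phi_{\theta_2}(x,y)$ (which is allowed to depend on $x$, contributing to the state-Lipschitz constant and giving a linear-growth coupling between $x$, $\mu$ and the volatility), while you use a fixed bank of \emph{bounded} Lipschitz features $\phi_1,\dots,\phi_m$ independent of $x$; your choice trivializes the growth estimate because $F(\mu)$ is uniformly bounded, but it also decouples the measure and state dependence and loses the ability to encode tail behavior of $\mu$. For a pure non-emptiness claim either construction suffices, and your framing — reducing everything to a finite-dimensional IFT via a $W_2$-Lipschitz feature map — makes the finite-dimensionality of the reduction especially explicit, which is a clean way to articulate how the Banach-space obstacle of \Cref{rem:mckean_lipschitz_revised} is being sidestepped. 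One small expository gap: the claim that the shift $h_\theta$ ``guarantees the existence of a positive root'' should be backed by the observation that $\partial_z \tilde g_\theta \ge c > 0$ forces $\tilde g_\theta \to +\infty$ as $z \to \infty$, so it suffices to choose $h_\theta$ (e.g.\ a constant exceeding $\sup|r_\theta(\cdot,\cdot,\cdot,0)|$) making $\tilde g_\theta$ negative at $z=0$; this is exactly the surjectivity point the paper makes when appealing back to \Cref{thm:existence_of_theta}.
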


\begin{proof}
The proof is constructive. We will define a specific architecture for the specialized driver $g_\theta(t,x,\mu,z)$ and show that by imposing architecturally enforceable constraints on its components, the resulting implicit volatility function $\nu_\theta$ satisfies the conditions of \Cref{ass:mckean_lipschitz_revised}.

\paragraph{Step 1: Architectural Design.}
We structure the specialized driver $g_\theta$ to separate the dependencies on $z$ and $\mu$:
\begin{equation}
\label{eq:mckean_driver_arch}
    g_\theta(t, x, \mu, z) \coloneqq h_{\theta_1}(t, x, z) - \mathbb{E}_{Y \sim \mu}[\phi_{\theta_2}(x, Y)],
\end{equation}
where $h_{\theta_1}: [0,T] \times \R \times \R \to \R$ and $\phi_{\theta_2}: \R \times \R \to \R$ are functions parameterized by distinct neural networks with parameters $\theta_1$ and $\theta_2$ respectively, with $\theta = (\theta_1, \theta_2)$. The expectation $\mathbb{E}_{Y \sim \mu}[\cdot]$ is equivalent to the integral $\int_\R \phi_{\theta_2}(x, y)\mu(\dd y)$.

\paragraph{Step 2: Defining the Implicit Volatility Function $\nu_\theta$.}
The canonical NBM condition is $g_\theta(t, x, \mu, z) = 0$. This yields the equation:
\[ h_{\theta_1}(t, x, z) = \mathbb{E}_{Y \sim \mu}[\phi_{\theta_2}(x, Y)]. \]
To ensure a unique, well-behaved root $z = \nu_\theta(t,x,\mu)$, we enforce structural properties on $h_{\theta_1}$. As in the proof of \Cref{thm:existence_of_theta}, we design the network for $h_{\theta_1}$ such that for any fixed $(t,x)$, the function $z \mapsto h_{\theta_1}(t,x,z)$ is strictly monotonic and surjective onto $\R$. This can be achieved, for example, by ensuring $\partial_z h_{\theta_1}$ is bounded below by a positive constant.

Under this condition, $h_{\theta_1}$ has a well-defined inverse with respect to its third argument, which we denote $h_{\theta_1}^{-1}(t,x,\cdot)$. The implicit volatility function is then given explicitly by:
\begin{equation}
\label{eq:nu_theta_explicit}
    \nu_\theta(t, x, \mu) \coloneqq h_{\theta_1}^{-1}\left(t, x, \mathbb{E}_{Y \sim \mu}[\phi_{\theta_2}(x, Y)]\right).
\end{equation}

\paragraph{Step 3: Imposing Enforceable Architectural Constraints.}
We now impose regularity conditions on the component networks $h_{\theta_1}$ and $\phi_{\theta_2}$. These conditions can be enforced during training using techniques like spectral normalization (to control Lipschitz constants) or by specific architectural choices (e.g., parameterizing weights as positive).
\begin{enumerate}[label=(C\arabic*), leftmargin=*]
    \item \textbf{Monotonicity and Regularity of $h_{\theta_1}$:} For all $(t,x,z)$, there exists a constant $c_h > 0$ such that $\partial_z h_{\theta_1}(t,x,z) \ge c_h$. This ensures the inverse $h_{\theta_1}^{-1}$ exists and, by the inverse function theorem, its partial derivative with respect to its third argument is bounded by $1/c_h$.
    \item \textbf{Lipschitzness of $h_{\theta_1}^{-1}$:} The inverse function $v(t,x,w) \mapsto h_{\theta_1}^{-1}(t,x,w)$ is Lipschitz in all its arguments on its domain. Let its Lipschitz constants be $L_{h,t}, L_{h,x}, L_{h,w}$.
    \item \textbf{Lipschitzness of $\phi_{\theta_2}$:} The function $\phi_{\theta_2}(x,y)$ is globally Lipschitz in both its arguments, with constants $L_{\phi,x}$ and $L_{\phi,y}$. That is, for any $x_1, x_2, y_1, y_2 \in \R$:
    \[ |\phi_{\theta_2}(x_1, y_1) - \phi_{\theta_2}(x_2, y_2)| \le L_{\phi,x}|x_1-x_2| + L_{\phi,y}|y_1-y_2|. \]
    \item \textbf{Growth of $\phi_{\theta_2}$:} $\phi_{\theta_2}$ has at most linear growth: there is a constant $C_\phi > 0$ such that $|\phi_{\theta_2}(x,y)| \le C_\phi(1+|x|+|y|)$.
\end{enumerate}

\paragraph{Step 4: Proof of Lipschitz Continuity in Measure (Condition ii).}
Let $(t,x)$ be fixed. We want to bound $|\nu_\theta(t, x, \mu_1) - \nu_\theta(t, x, \mu_2)|$ for $\mu_1, \mu_2 \in \Pcal_2(\R)$.
Let $I(\mu) \coloneqq \mathbb{E}_{Y \sim \mu}[\phi_{\theta_2}(x, Y)]$. From \eqref{eq:nu_theta_explicit}, we have:
\begin{align*}
    |\nu_\theta(t, x, \mu_1) - \nu_\theta(t, x, \mu_2)| &= |h_{\theta_1}^{-1}(t,x,I(\mu_1)) - h_{\theta_1}^{-1}(t,x,I(\mu_2))| \\
    &\le L_{h,w} |I(\mu_1) - I(\mu_2)| \quad \text{(by (C2))}.
\end{align*}
Now we bound $|I(\mu_1) - I(\mu_2)|$. By the definition of the 2-Wasserstein distance, there exists a coupling $(Y_1, Y_2)$ of random variables with marginal laws $\mu_1$ and $\mu_2$ respectively, such that $W_2(\mu_1, \mu_2)^2 = \E[|Y_1-Y_2|^2]$.
\begin{align*}
    |I(\mu_1) - I(\mu_2)| &= |\E_{Y_1 \sim \mu_1}[\phi_{\theta_2}(x, Y_1)] - \E_{Y_2 \sim \mu_2}[\phi_{\theta_2}(x, Y_2)]| \\
    &= |\E[\phi_{\theta_2}(x, Y_1) - \phi_{\theta_2}(x, Y_2)]| \quad \text{(using the coupling)} \\
    &\le \E[|\phi_{\theta_2}(x, Y_1) - \phi_{\theta_2}(x, Y_2)|] \quad \text{(by Jensen's inequality)} \\
    &\le \E[L_{\phi,y}|Y_1 - Y_2|] \quad \text{(by Lipschitzness of $\phi_{\theta_2}$ in $y$, (C3))} \\
    &\le L_{\phi,y} \left( \E[|Y_1 - Y_2|^2] \right)^{1/2} \quad \text{(by Cauchy-Schwarz)} \\
    &= L_{\phi,y} W_2(\mu_1, \mu_2).
\end{align*}
Combining the inequalities, we get:
\[ |\nu_\theta(t, x, \mu_1) - \nu_\theta(t, x, \mu_2)| \le (L_{h,w} L_{\phi,y}) W_2(\mu_1, \mu_2). \]
This establishes Lipschitz continuity in the measure with constant $L_\mu = L_{h,w} L_{\phi,y}$.

\paragraph{Step 5: Proof of Lipschitz Continuity in State (Condition i).}
Let $(t,\mu)$ be fixed. Let $I(x,\mu) \coloneqq \mathbb{E}_{Y \sim \mu}[\phi_{\theta_2}(x, Y)]$.
\begin{align*}
    &|\nu_\theta(t, x_1, \mu) - \nu_\theta(t, x_2, \mu)| = |h_{\theta_1}^{-1}(t,x_1,I(x_1,\mu)) - h_{\theta_1}^{-1}(t,x_2,I(x_2,\mu))| \\
    &\le L_{h,x}|x_1-x_2| + L_{h,w}|I(x_1,\mu) - I(x_2,\mu)| \quad \text{(by (C2))}.
\end{align*}
We bound the second term:
\begin{align*}
    |I(x_1,\mu) - I(x_2,\mu)| &= |\E_{Y \sim \mu}[\phi_{\theta_2}(x_1, Y) - \phi_{\theta_2}(x_2, Y)]| \\
    &\le \E_{Y \sim \mu}[|\phi_{\theta_2}(x_1, Y) - \phi_{\theta_2}(x_2, Y)|] \\
    &\le \E_{Y \sim \mu}[L_{\phi,x}|x_1 - x_2|] = L_{\phi,x}|x_1 - x_2| \quad \text{(by (C3))}.
\end{align*}
Substituting this back, we get:
\[ |\nu_\theta(t, x_1, \mu) - \nu_\theta(t, x_2, \mu)| \le (L_{h,x} + L_{h,w}L_{\phi,x})|x_1-x_2|. \]
This establishes Lipschitz continuity in the state with constant $L_x = L_{h,x} + L_{h,w}L_{\phi,x}$.

\paragraph{Step 6: Proof of Linear Growth (Condition iii).}
We need to bound $|\nu_\theta(t,x,\mu)|^2$. We assume without loss of generality that $h_{\theta_1}^{-1}(t,x,0)=0$ and $\phi_{\theta_2}(0,0)=0$.
\begin{align*}
    |\nu_\theta(t,x,\mu)| &= |h_{\theta_1}^{-1}(t,x,I(x,\mu)) - h_{\theta_1}^{-1}(t,x,0)| \\
    &\le L_{h,x}|x| + L_{h,w}|I(x,\mu)|.
\end{align*}
Now we bound $|I(x,\mu)|$ using the linear growth condition (C4) on $\phi_{\theta_2}$:
\begin{align*}
|I(x,\mu)| &= |\E_{Y \sim \mu}[\phi_{\theta_2}(x,Y)]| \le \E_{Y \sim \mu}[|\phi_{\theta_2}(x,Y)|] \\
&\le \E_{Y \sim \mu}[C_\phi(1+|x|+|Y|)] = C_\phi(1+|x|+\E_{Y \sim \mu}[|Y|]).
\end{align*}
By Cauchy-Schwarz, $\E[|Y|] \le (\E[|Y|^2])^{1/2} = (\int y^2 \mu(\dd y))^{1/2} = W_2(\mu,\delta_0)$.
So, $|I(x,\mu)| \le C_\phi(1+|x|+W_2(\mu,\delta_0))$. Substituting back:
\[ |\nu_\theta(t,x,\mu)| \le L_{h,x}|x| + L_{h,w}C_\phi(1+|x|+W_2(\mu,\delta_0)) \le C(1+|x|+W_2(\mu,\delta_0)), \]
for some constant $C$. Squaring this gives:
\[ |\nu_\theta(t,x,\mu)|^2 \le C^2(1+|x|+W_2(\mu,\delta_0))^2 \le 3C^2(1+|x|^2+W_2(\mu,\delta_0)^2), \]
which is the required linear growth condition.

Since we have constructively shown that there exists a family of neural network architectures for $g_\theta$ such that the resulting implicit volatility function $\nu_\theta$ satisfies all three conditions of \Cref{ass:mckean_lipschitz_revised}, the set of such drivers is non-empty. This completes the proof.
\end{proof}

\subsection{The Mean-Field Limit: A Non-Linear SDE}

We first establish the well-posedness of the limiting non-linear process, which is a solution to a stochastic differential equation whose coefficients depend on the law of the solution itself.

\begin{proposition}[Well-Posedness of the Mean-Field SDE]
\label{prop:mckean_sde_wellposedness}
Let \Cref{ass:mckean_lipschitz_revised} hold and let $\mu_0 \in \Pcal_2(\R)$ be a given initial law. Then there exists a unique probability measure flow $(\mu_t)_{t \in [0,T]} \in C([0,T], \Pcal_2(\R))$ such that $\mu_t$ is the law of the unique strong solution to the non-linear SDE:
\begin{equation} \label{eq:nlsde_revised}
    \dd M_t = \nu_\theta(t, M_t, \mu_t)\dd W_t, \quad \text{with } \mathrm{Law}(M_t) = \mu_t \text{ and } \mathrm{Law}(M_0) = \mu_0.
\end{equation}
\end{proposition}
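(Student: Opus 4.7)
The plan is to establish existence and uniqueness by a Picard-type fixed point argument on the space of measure flows $\mathcal{M} \coloneqq C([0,T], \Pcal_2(\R))$, equipped with a suitable complete metric. Specifically, for a fixed $\lambda > 0$ to be chosen later, I would use the metric
\[
    D_\lambda(\mu, \pi) \coloneqq \sup_{t \in [0,T]} e^{-\lambda t}\, W_2(\mu_t, \pi_t),
\]
restricted to the closed subset $\mathcal{M}_0 \subset \mathcal{M}$ of flows with $\mu_0$ equal to the prescribed initial law and with $\sup_{t \in [0,T]} W_2(\mu_t, \delta_0) \le R$ for an $R$ determined below; completeness of $(\mathcal{M}_0, D_\lambda)$ follows from completeness of the 2-Wasserstein space.

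First I would define the solution map $\Phi: \mathcal{M}_0 \to \mathcal{M}_0$. Given $\mu = (\mu_t) \in \mathcal{M}_0$, the coefficient $(t,x) \mapsto \nu_\theta(t, x, \mu_t)$ is, by \Cref{ass:mckean_lipschitz_revised}(i) and (iii), Lipschitz in $x$ uniformly in $t$ and of linear growth in $x$ (with a constant depending on $R$). Standard SDE theory (Karatzas--Shreve, Thm. 5.2.9) then produces a unique strong, continuous, square-integrable solution $M^\mu$ to $\dd M^\mu_t = \nu_\theta(t, M^\mu_t, \mu_t) \dd W_t$ with $M^\mu_0 \sim \mu_0$, and I would set $\Phi(\mu)_t \coloneqq \mathrm{Law}(M^\mu_t)$. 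Continuity of $t \mapsto \Phi(\mu)_t$ in $W_2$ follows from the standard moment estimate $\E[|M^\mu_t - M^\mu_s|^2] \le C|t-s|$. A Grönwall argument applied to $\E[|M^\mu_t|^2]$, using the linear growth condition together with $\E[|M^\mu_0|^2] = \int x^2 \mu_0(\dd x) < \infty$, provides a uniform bound on $\sup_{t} \E[|M^\mu_t|^2]$, so that by choosing $R$ large enough the map $\Phi$ sends $\mathcal{M}_0$ into itself.

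Next I would establish the contraction estimate. Given $\mu, \pi \in \mathcal{M}_0$, couple the candidate solutions $M^\mu$ and $M^\pi$ by driving them with the same Brownian motion $W$ and the same initial condition. Writing $\Delta_t \coloneqq M^\mu_t - M^\pi_t$, Itô's isometry yields
\[
    \E[\Delta_t^2] = \int_0^t \E\bigl[(\nu_\theta(s, M^\mu_s, \mu_s) - \nu_\theta(s, M^\pi_s, \pi_s))^2\bigr] \dd s.
\]
Splitting the integrand using the triangle inequality and applying \Cref{ass:mckean_lipschitz_revised}(i)--(ii), together with the obvious bound $W_2(\mathrm{Law}(M^\mu_s), \mathrm{Law}(M^\pi_s))^2 \le \E[\Delta_s^2]$, gives
\[
    \E[\Delta_t^2] \le 2L_x^2 \int_0^t \E[\Delta_s^2] \dd s + 2L_\mu^2 \int_0^t W_2(\mu_s,\pi_s)^2 \dd s.
\]
Grönwall's inequality then yields $\E[\Delta_t^2] \le K \int_0^t W_2(\mu_s,\pi_s)^2 \dd s$ for $K \coloneqq 2 L_\mu^2 e^{2L_x^2 T}$, hence
\[
    W_2(\Phi(\mu)_t, \Phi(\pi)_t)^2 \le K \int_0^t e^{2\lambda s} e^{-2\lambda s} W_2(\mu_s,\pi_s)^2 \dd s \le \frac{K}{2\lambda} e^{2\lambda t} D_\lambda(\mu,\pi)^2.
\]
Multiplying by $e^{-2\lambda t}$ and taking the supremum shows $D_\lambda(\Phi(\mu),\Phi(\pi)) \le \sqrt{K/(2\lambda)}\, D_\lambda(\mu,\pi)$, which is a strict contraction for $\lambda > K/2$. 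The Banach fixed point theorem on $(\mathcal{M}_0, D_\lambda)$ then delivers a unique fixed point $\mu^\star$, and the associated process $M^{\mu^\star}$ is the unique strong solution of \eqref{eq:nlsde_revised} with $\mathrm{Law}(M^{\mu^\star}_t) = \mu^\star_t$.

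The main obstacle is purely technical rather than structural: I expect the careful choice of the invariant ball $\mathcal{M}_0$ (so that $\Phi$ does not leave it) to require some bookkeeping, since the linear growth bound in \Cref{ass:mckean_lipschitz_revised}(iii) contains the measure argument through $W_2(\mu_s, \delta_0)$ and one must verify that the a priori second-moment estimate closes uniformly in the iterates. Once that is handled, the contraction estimate above is essentially automatic from the Lipschitz hypothesis. Strong uniqueness beyond the class $\mathcal{M}_0$ follows by noting that any solution automatically has finite second moments (by the same Grönwall estimate applied to $\E[|M_t|^2]$), so any two solutions produce measure flows lying in some common $\mathcal{M}_0$ and must coincide by the fixed-point uniqueness.
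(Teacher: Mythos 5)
Your argument is a Picard fixed-point on $C([0,T],\Pcal_2(\R))$, the same skeleton as the paper's proof, but the mechanism that delivers the contraction is different. The paper shrinks the time horizon: it proves $\Phi$ is a contraction on $C([0,T_0],\Pcal_2(\R))$ for $T_0$ small (so that $2L_\mu^2 T_0 e^{2L_x^2 T_0}<1$), then concatenates local solutions over $[0,T_0],[T_0,2T_0],\dots$. You instead keep the full horizon and weight the metric by $e^{-\lambda t}$, choosing $\lambda$ large so that $\sqrt{K/(2\lambda)}<1$. Both are standard; the weighted-norm route avoids the iteration/gluing step and gives global uniqueness in one shot, at the cost of introducing the auxiliary parameter $\lambda$. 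Either way the key contraction estimate is identical (Itô isometry, splitting the Lipschitz bound in $x$ and in $\mu$, the coupling bound $W_2(\mathrm{Law}(M^\mu_s),\mathrm{Law}(M^\pi_s))^2\le\E[\Delta_s^2]$, and Grönwall), and I have checked that your algebra for $K=2L_\mu^2 e^{2L_x^2 T}$ and the final factor $K/(2\lambda)$ is correct.

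One point to fix: the restriction to the ball $\mathcal{M}_0=\{\mu:\sup_t W_2(\mu_t,\delta_0)\le R\}$ is unnecessary and, as you state it, the claim that ``by choosing $R$ large enough the map $\Phi$ sends $\mathcal{M}_0$ into itself'' does not actually go through for arbitrary $T$. The a priori estimate from Itô isometry and Grönwall gives
\[
\sup_{t\le T}\E[|M^\mu_t|^2]\le\bigl(m_0+C^2(1+R^2)T\bigr)e^{C^2 T},\qquad m_0=\textstyle\int x^2\,\mu_0(\dd x),
\]
and for this to be $\le R^2$ you need $C^2 T e^{C^2 T}<1$, i.e.\ a small-time condition—exactly what the weighted norm was supposed to let you avoid. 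The simple fix is to drop the ball entirely: any $\mu\in C([0,T],\Pcal_2(\R))$ automatically has $\sup_t W_2(\mu_t,\delta_0)<\infty$ (continuous image of a compact interval), so the Grönwall estimate shows $\sup_t\E[|M^\mu_t|^2]<\infty$ and hence $\Phi(\mu)\in C([0,T],\Pcal_2(\R))$ without any uniform $R$. The weighted metric $D_\lambda$ is equivalent to the sup metric on $[0,T]$, so $(C([0,T],\Pcal_2(\R)),D_\lambda)$ is already complete, and Banach's theorem applies on the whole space. With that simplification your proof is complete and correct.
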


\begin{proof}
The proof proceeds by constructing a contraction mapping on the complete metric space of measure-valued paths, $\mathcal{X} \coloneqq C([0,T], \Pcal_2(\R))$, equipped with the metric $d_T(\pi^1, \pi^2) \coloneqq \sup_{t\in[0,T]} W_2(\pi^1_t, \pi^2_t)$. A solution to \eqref{eq:nlsde_revised} is a fixed point of a suitable map on this space.

\textbf{Step 1: The Picard Iteration Map.}
Define a map $\Phi: \mathcal{X} \to \mathcal{X}$. For any given measure path $(\pi_t)_{t \in [0,T]} \in \mathcal{X}$, consider the classical SDE:
\[ \dd X_t = \nu_\theta(t, X_t, \pi_t) \dd W_t, \quad \mathrm{Law}(X_0) = \mu_0. \]
Under \Cref{ass:mckean_lipschitz_revised}, the coefficient $v(t,x) \coloneqq \nu_\theta(t,x,\pi_t)$ is Lipschitz in $x$ and satisfies a linear growth condition. Standard SDE theory guarantees the existence of a unique strong solution $X_t$. We define the map $\Phi$ as the law of this solution: $\Phi(\pi)_t \coloneqq \mathrm{Law}(X_t)$. Stability estimates for SDEs ensure that $t \mapsto \Phi(\pi)_t$ is continuous in the Wasserstein metric, so $\Phi(\pi) \in \mathcal{X}$.

\textbf{Step 2: Proving the Contraction Property.}
We show that for a sufficiently small time horizon $T_0 > 0$, $\Phi$ is a contraction. Let $(\pi^1_t), (\pi^2_t) \in \mathcal{X}$. Let $X^1, X^2$ be the corresponding solutions, coupled to be driven by the same Brownian motion $W$ and with the same initial condition $X^1_0 = X^2_0 \sim \mu_0$. The squared Wasserstein distance is bounded by the mean squared difference under this coupling:
\begin{align*}
W_2(\Phi(\pi^1)_t, \Phi(\pi^2)_t)^2 &\le \E[\abs{X^1_t - X^2_t}^2] \\
&= \E\left[ \abs{\int_0^t \left( \nu_\theta(s, X^1_s, \pi^1_s) - \nu_\theta(s, X^2_s, \pi^2_s) \right) \dd W_s}^2 \right] \\
&= \E\left[ \int_0^t \abs{\nu_\theta(s, X^1_s, \pi^1_s) - \nu_\theta(s, X^2_s, \pi^2_s)}^2 \dd s \right] \quad \text{(by Itô isometry)}.
\end{align*}
We bound the integrand using the triangle inequality and \Cref{ass:mckean_lipschitz_revised}:
\begin{align*}
&\abs{\nu_\theta(s, X^1_s, \pi^1_s) - \nu_\theta(s, X^2_s, \pi^2_s)}^2 \\
&\quad\le \left( \abs{\nu_\theta(s, X^1_s, \pi^1_s) - \nu_\theta(s, X^2_s, \pi^1_s)} + \abs{\nu_\theta(s, X^2_s, \pi^1_s) - \nu_\theta(s, X^2_s, \pi^2_s)} \right)^2 \\
&\quad\le 2\abs{\nu_\theta(s, X^1_s, \pi^1_s) - \nu_\theta(s, X^2_s, \pi^1_s)}^2 + 2\abs{\nu_\theta(s, X^2_s, \pi^1_s) - \nu_\theta(s, X^2_s, \pi^2_s)}^2 \\
&\quad\le 2L_x^2 \abs{X^1_s - X^2_s}^2 + 2L_\mu^2 W_2(\pi^1_s, \pi^2_s)^2.
\end{align*}
Let $e(t) \coloneqq \E[\abs{X^1_t - X^2_t}^2]$. Integrating the bound above from $0$ to $t$:
\begin{align*}
e(t) &\le \int_0^t \left( 2L_x^2 \E[\abs{X^1_s - X^2_s}^2] + 2L_\mu^2 W_2(\pi^1_s, \pi^2_s)^2 \right) \dd s \\
&\le 2L_x^2 \int_0^t e(s) \dd s + 2L_\mu^2 \int_0^t d_s(\pi^1, \pi^2)^2 \dd s \\
&\le 2L_x^2 \int_0^t e(s) \dd s + 2L_\mu^2 t \cdot d_t(\pi^1, \pi^2)^2.
\end{align*}
By Grönwall's inequality, $e(t) \le (2L_\mu^2 t \cdot d_t(\pi^1, \pi^2)^2) e^{2L_x^2 t}$. Taking the supremum over $t \in [0, T_0]$:
\[ d_{T_0}(\Phi(\pi^1), \Phi(\pi^2))^2 = \sup_{t \in [0,T_0]} W_2(\Phi(\pi^1)_t, \Phi(\pi^2)_t)^2 \le \sup_{t \in [0,T_0]} e(t) \le \left( 2L_\mu^2 T_0 e^{2L_x^2 T_0} \right) d_{T_0}(\pi^1, \pi^2)^2. \]
We can choose $T_0 > 0$ small enough such that the constant $K \coloneqq \sqrt{2L_\mu^2 T_0 e^{2L_x^2 T_0}} < 1$. For such a $T_0$, $\Phi$ is a contraction on the complete metric space $C([0,T_0], \Pcal_2(\R))$. The Banach fixed-point theorem guarantees the existence of a unique fixed point on $[0,T_0]$.

\textbf{Step 3: Extension to $[0,T]$.}
This local solution can be extended to the full interval $[0,T]$ by iterating the procedure. We solve on $[0, T_0]$, then use the terminal law $\mu_{T_0}$ as the initial law for the interval $[T_0, 2T_0]$, and so on. Since the contraction constant $K$ depends only on the length of the time interval, not on the initial condition, this procedure can be repeated a finite number of times to cover $[0,T]$.
\end{proof}

\subsection{Propagation of Chaos}

We can now state the main result on the convergence of the $N$-particle system.

\begin{proposition}[Propagation of Chaos for the NBM System]
\label{prop:propagation_of_chaos}
Let \Cref{ass:mckean_lipschitz_revised} hold. Let the initial conditions $\{m_0^i\}_{i=1}^N$ be i.i.d. samples from $\mu_0 \in \Pcal_2(\R)$. As $N \to \infty$, the empirical measure $\mu^N_t$ of the interacting system \eqref{eq:interacting_nbm_sde_revised} converges in law, in the space $C([0,T], \Pcal_2(\R))$, to the deterministic measure flow $(\mu_t)_{t\in[0,T]}$ which is the unique solution to the non-linear SDE \eqref{eq:nlsde_revised}.
\end{proposition}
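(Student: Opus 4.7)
The plan is to follow the classical Sznitman-style coupling argument. On the same probability space as the interacting system, I would introduce $N$ independent copies of the limiting McKean-Vlasov process: for each $i \in \{1,\ldots,N\}$, let $\bar{M}^i$ be the unique strong solution to the non-linear SDE \eqref{eq:nlsde_revised} driven by the same Brownian motion $W^i$ and initialized at $m^i_0$. By \Cref{prop:mckean_sde_wellposedness}, together with the fact that the $(W^i, m^i_0)$ are i.i.d., the $\bar{M}^i$ are i.i.d. with common deterministic law flow $\mu_t$. Writing $\bar{\mu}^N_t \coloneqq \frac{1}{N}\sum_{i=1}^N \delta_{\bar{M}^i_t}$ for the empirical measure of the independent copies, the strategy is to compare $\mu^N_t$ to $\bar{\mu}^N_t$ via a particle-by-particle coupling estimate, and $\bar{\mu}^N_t$ to $\mu_t$ via a quantitative empirical-measure law of large numbers.

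The core estimate concerns $e_N(t) \coloneqq \E[\abs{M^{1,N}_t - \bar{M}^1_t}^2]$, which by exchangeability agrees with $\E[\abs{M^{i,N}_t - \bar{M}^i_t}^2]$ for every $i$. Subtracting the defining SDEs, applying Itô's isometry, and invoking \Cref{ass:mckean_lipschitz_revised} through the splitting
\[ \abs{\nu_\theta(s, M^{i,N}_s, \mu^N_s) - \nu_\theta(s, \bar{M}^i_s, \mu_s)}^2 \le 2 L_x^2 \abs{M^{i,N}_s - \bar{M}^i_s}^2 + 2 L_\mu^2 W_2(\mu^N_s, \mu_s)^2 \]
produces a Grönwall-ready inequality. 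The measure term is controlled by the triangle inequality $W_2(\mu^N_s, \mu_s)^2 \le 2 W_2(\mu^N_s, \bar{\mu}^N_s)^2 + 2 W_2(\bar{\mu}^N_s, \mu_s)^2$ together with the elementary coupling bound $W_2(\mu^N_s, \bar{\mu}^N_s)^2 \le \frac{1}{N}\sum_j \abs{M^{j,N}_s - \bar{M}^j_s}^2$, whose expectation equals $e_N(s)$ by exchangeability. Absorbing these terms and applying Grönwall's lemma yields
\[ \sup_{t \in [0,T]} e_N(t) \le C_T \int_0^T \E[W_2(\bar{\mu}^N_s, \mu_s)^2]\,\dd s. \]

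The conclusion then follows from the quantitative Glivenko--Cantelli rates of Fournier--Guillin, which give $\E[W_2(\bar{\mu}^N_s, \mu_s)^2] \to 0$ as $N \to \infty$ provided $\mu_s$ has a finite moment of some order $p > 2$. A BDG refinement applied to the particle differences upgrades the pointwise-in-$t$ estimate to $\E[\sup_{t \in [0,T]} W_2(\mu^N_t, \mu_t)^2] \to 0$, which, since the limit $\mu$ is deterministic, implies convergence in law in $C([0,T], \Pcal_2(\R))$. The main obstacle I anticipate is not the abstract coupling machinery but the uniform moment control required by Fournier--Guillin: condition (iii) of \Cref{ass:mckean_lipschitz_revised} involves $W_2(\mu^N_s, \delta_0)$, so the natural bound on $\E[\abs{M^{1,N}_t}^p]$ is self-referential through the empirical moment. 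The remedy is to aggregate over particles, use $W_2(\mu^N_s, \delta_0)^p \le \frac{1}{N}\sum_j \abs{M^{j,N}_s}^p$ together with exchangeability to close a Grönwall inequality for the average moment $\frac{1}{N}\sum_i \E[\abs{M^{i,N}_t}^p]$, and only then transfer the estimate to the limiting process $\bar{M}^i$ through the mean-field SDE.
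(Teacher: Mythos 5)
Your proposal follows exactly the same Sznitman coupling strategy as the paper's proof: introduce an auxiliary system of i.i.d. solutions to the limiting McKean--Vlasov SDE driven by the same noises, estimate the particle-by-particle mean-square discrepancy via Itô isometry and the Lipschitz bounds of \Cref{ass:mckean_lipschitz_revised}, decompose $W_2(\mu^N_s,\mu_s)$ through the intermediate empirical measure $\bar\mu^N_s$, and close the argument with Grönwall. The cosmetic difference (you track the exchangeable quantity $e_N(t)=\E[\abs{M^{1,N}_t-\bar M^1_t}^2]$, the paper tracks the particle average $E_N(t)$) is immaterial. Where you actually go beyond the paper is in flagging two real technical gaps that its proof elides: first, the paper bounds $\E[W_2(\mu^N_t,\mu_t)^2]$ pointwise in $t$ and then asserts convergence in law in $C([0,T],\Pcal_2(\R))$ without supplying the uniform-in-time control that a BDG-type estimate would provide; second, the paper cites a rate $\epsilon_N(s)\le C_T/N$ for the empirical measure of the i.i.d. system, but quantitative Glivenko--Cantelli rates in $W_2$ (Fournier--Guillin) require strictly more than second moments, and you correctly observe that the necessary moment propagation is self-referential through $W_2(\mu^N_s,\delta_0)$ in condition (iii) and must be closed by aggregating over particles. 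These are genuine refinements the paper's informal treatment omits, but they do not constitute a different route — they are the missing rigor within the same route.
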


\begin{proof}
Let $(\bar{M}^i_t)_{i=1}^N$ be an ideal system of $N$ i.i.d. processes, each solving the non-linear SDE \eqref{eq:nlsde_revised} independently, driven by the same independent Brownian motions $(W^i_t)$ and with the same initial conditions $\bar{M}^i_0 = m^i_0$. Let $\mu_t = \mathrm{Law}(\bar{M}^i_t)$ be the deterministic measure flow from \Cref{prop:mckean_sde_wellposedness}. We aim to bound the average mean-squared error between the true system and the ideal system: $E_N(t) \coloneqq \frac{1}{N} \sum_{i=1}^N \E[\abs{M^{i,N}_t - \bar{M}^i_t}^2]$.

Applying Itô isometry to the difference $M^{i,N}_t - \bar{M}^i_t$:
\begin{align*}
\E[\abs{M^{i,N}_t - \bar{M}^i_t}^2] &= \E\left[\int_0^t \abs{\nu_\theta(s, M^{i,N}_s, \mu^N_s) - \nu_\theta(s, \bar{M}^i_s, \mu_s)}^2 \dd s\right] \\
&\le 2\E\left[\int_0^t \left(L_x^2\abs{M^{i,N}_s - \bar{M}^i_s}^2 + L_\mu^2 W_2(\mu^N_s, \mu_s)^2 \right) \dd s\right].
\end{align*}
Averaging over $i=1,\dots,N$:
\[ E_N(t) \le 2L_x^2 \int_0^t E_N(s) \dd s + 2L_\mu^2 \int_0^t \E[W_2(\mu^N_s, \mu_s)^2] \dd s. \]
Let $\bar{\mu}^N_s = \frac{1}{N}\sum_i \delta_{\bar{M}^i_s}$ be the empirical measure of the ideal system. We bound the Wasserstein term by the triangle inequality:
\begin{align*}
\E[W_2(\mu^N_s, \mu_s)^2] &\le 2\E[W_2(\mu^N_s, \bar{\mu}^N_s)^2] + 2\E[W_2(\bar{\mu}^N_s, \mu_s)^2] \\
&\le 2\E\left[\frac{1}{N}\sum_i \abs{M^{i,N}_s - \bar{M}^i_s}^2\right] + 2\E[W_2(\bar{\mu}^N_s, \mu_s)^2] \\
&= 2 E_N(s) + 2\epsilon_N(s),
\end{align*}
where $\epsilon_N(s) \coloneqq \E[W_2(\bar{\mu}^N_s, \mu_s)^2]$ measures the convergence of the empirical measure of $N$ i.i.d. samples to their common law. It is a standard result that $\lim_{N\to\infty} \epsilon_N(s) = 0$ for each $s$, and one can find a uniform bound $\sup_{s \in [0,T]} \epsilon_N(s) \le C_T/N$ for some constant $C_T$ (see, e.g., \cite[Vol I]{CarmonaDelarue2018}).
Substituting this back into the inequality for $E_N(t)$:
\[ E_N(t) \le (2L_x^2 + 4L_\mu^2) \int_0^t E_N(s) \dd s + 4L_\mu^2 \int_0^t \epsilon_N(s) \dd s. \]
By Grönwall's inequality, there exists a constant $C'_{T}$ depending on $L_x, L_\mu, T$ such that:
\[ E_N(t) \le \left(4L_\mu^2 \int_0^T \epsilon_N(s) \dd s\right) e^{C'_{T} T}. \]
Since $\int_0^T \epsilon_N(s) \dd s \to 0$ as $N \to \infty$, we conclude that $\lim_{N \to \infty} \sup_{t \in [0,T]} E_N(t) = 0$. This proves convergence in mean square of any particle $M^{i,N}$ to the corresponding ideal particle $\bar{M}^i$. This implies convergence in law of the empirical measure $\mu^N_t$ to the deterministic measure $\mu_t$.
\end{proof}

\subsection{The Neural McKean-Vlasov PDE}

As a consequence of the limiting behavior, if the limiting measure flow admits a density, that density must solve a non-linear Fokker-Planck-Kolmogorov type equation.

\begin{proposition}[The Neural McKean-Vlasov Equation]
\label{cor:mckean_vlasov_pde}
Under the assumptions of \Cref{prop:mckean_sde_wellposedness}, if the measure flow $\mu_t$ admits a density $u(t,x)$ for all $t \in [0,T]$ such that $u \in C^{1,2}([0,T]\times\R)$, then $u(t,x)$ is a classical solution to the \textbf{Neural McKean-Vlasov Equation}:
\begin{equation}
\label{eq:mckean_vlasov_pde_revised}
    \partial_t u(t,x) = \frac{1}{2} \partial_{xx} \left[ \nu_\theta(t, x, \mu_t)^2 u(t,x) \right],
\end{equation}
with initial condition $u(0,x) = u_0(x)$, where $\mu_t$ is the measure with density $u(t, \cdot)$. In general, $u$ is a weak solution to this equation.
\end{proposition}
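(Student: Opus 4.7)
The plan is to derive the PDE as the Fokker--Planck--Kolmogorov equation of the non-linear SDE \eqref{eq:nlsde_revised}, testing it against smooth compactly supported functions to obtain the weak form, and then upgrading the resulting identity to a pointwise one using the $C^{1,2}$ regularity of $u$. By \Cref{prop:mckean_sde_wellposedness}, the measure flow $(\mu_t)$ is the family of marginal laws of the unique strong solution $(M_t)$ to $\dd M_t = \nu_\theta(t,M_t,\mu_t)\dd W_t$ with $\mathrm{Law}(M_0)=\mu_0$, so the whole computation reduces to tracking the time-evolution of $\int \varphi(x) u(t,x)\dd x$ via Itô calculus.

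First, I would fix a test function $\varphi \in C_c^\infty(\R)$ and apply Itô's formula to $\varphi(M_t)$, which is justified by the linear growth of $\nu_\theta$ in \Cref{ass:mckean_lipschitz_revised}(iii) together with standard moment bounds for $(M_t)$. This yields
\[
\varphi(M_t) = \varphi(M_0) + \int_0^t \tfrac{1}{2} \nu_\theta(s,M_s,\mu_s)^2 \varphi''(M_s)\dd s + \int_0^t \varphi'(M_s)\nu_\theta(s,M_s,\mu_s) \dd W_s.
\]
The stochastic integral is a true martingale because $\varphi'$ is bounded and $\nu_\theta^2$ is $L^2$-dominated by $1+\abs{M_s}^2 + W_2(\mu_s,\delta_0)^2$. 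Taking expectations and using $\mathrm{Law}(M_t)=\mu_t$ with density $u(t,\cdot)$, one obtains the integral identity
\[
\int_\R \varphi(x) u(t,x)\dd x - \int_\R \varphi(x) u_0(x)\dd x = \frac{1}{2}\int_0^t \int_\R \nu_\theta(s,x,\mu_s)^2 \varphi''(x) u(s,x)\dd x \dd s.
\]

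Second, I would differentiate in $t$. The right-hand side is absolutely continuous in $t$: continuity of $s\mapsto \mu_s$ in $W_2$ (from \Cref{prop:mckean_sde_wellposedness}), joint measurability of $\nu_\theta$, and the growth bound in \Cref{ass:mckean_lipschitz_revised}(iii) together make the integrand continuous in $s$ and uniformly integrable on compact subintervals. The left-hand side equals $\int \varphi(x) \partial_t u(t,x) \dd x$ under the $C^{1,2}$ hypothesis on $u$ (Leibniz differentiation under the integral sign is justified because $\varphi$ has compact support). Integrating by parts twice on the spatial integral, with all boundary terms vanishing by the compact support of $\varphi$, yields
\[
\int_\R \varphi(x)\partial_t u(t,x)\dd x = \frac{1}{2}\int_\R \varphi(x) \partial_{xx}\bigl[\nu_\theta(t,x,\mu_t)^2 u(t,x)\bigr]\dd x.
\]
This is precisely the weak formulation, which holds without any further regularity hypothesis on $\nu_\theta$. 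Since $\varphi \in C_c^\infty(\R)$ is arbitrary, the fundamental lemma of the calculus of variations upgrades this to the pointwise identity \eqref{eq:mckean_vlasov_pde_revised} provided both integrands are continuous in $x$; the initial condition is inherited from $\mathrm{Law}(M_0)=\mu_0$.

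The main obstacle is precisely this last upgrade. \Cref{ass:mckean_lipschitz_revised} delivers only Lipschitz regularity of $\nu_\theta$ in $x$, whereas the classical right-hand side $\partial_{xx}[\nu_\theta(t,x,\mu_t)^2 u(t,x)]$ requires $\nu_\theta(t,\cdot,\mu_t)$ to be $C^2$ in $x$ for the expression to even be defined pointwise. For the classical statement to go through, one would either restrict to weak solutions (which the proposition already concedes as the general case) or impose additional $C^2$-regularity of $\nu_\theta$ in $x$; by the implicit function theorem, this can be traced back to $C^3$-smoothness of the constructed driver $g_\theta$ together with the non-degeneracy $\partial_z g_\theta \neq 0$ at the root already built into \Cref{ass:implicit_vol}. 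A secondary technical point is the uniform integrability argument needed to pass $\partial_t$ through the spatial integral against $\varphi$, but this is routine once $\varphi$ has compact support and $u\in C^{1,2}$.
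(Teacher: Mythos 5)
Your proof follows essentially the same route as the paper: fix $\varphi \in C_c^\infty(\R)$, apply Itô's formula to $\varphi(M_t)$, take expectations, identify $\frac{\dd}{\dd t}\E[\varphi(M_t)]$ with $\int \varphi\,\partial_t u\,\dd x$, integrate by parts twice against the compact support of $\varphi$, and conclude the weak form by arbitrariness of $\varphi$. You do supply several justifications the paper elides — that the stochastic integral is a true martingale under the linear growth bound, and that the time-differentiation of the integral identity is licensed — and these are correct and worth making explicit.

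The more substantive point you raise is the one the paper does not address: the hypothesis $u \in C^{1,2}$ is, by itself, \emph{not} sufficient to upgrade the weak identity to the displayed pointwise equation, since the right-hand side $\partial_{xx}\bigl[\nu_\theta(t,x,\mu_t)^2 u(t,x)\bigr]$ requires $x \mapsto \nu_\theta(t,x,\mu_t)^2$ to be twice differentiable, whereas \Cref{ass:mckean_lipschitz_revised} delivers only Lipschitz regularity in $x$. You are right that an additional $C^2$-in-$x$ hypothesis on $\nu_\theta$ (traceable to $C^3$ smoothness of $g_\theta$ plus the non-degeneracy of $\partial_z g_\theta$ at the root) is needed for the ``classical solution'' clause, and that without it only the weak form is rigorous. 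This is a genuine gap in the proposition as stated; the paper's proof concludes at the distributional identity and tacitly passes to the classical form. Your flagging of it is correct.
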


\begin{proof}
The derivation follows from considering the evolution of the expectation of a smooth test function $\phi \in C_c^\infty(\R)$ with respect to the law $\mu_t = \mathrm{Law}(M_t)$.
On one hand, $\frac{\dd}{\dd t} \E[\phi(M_t)] = \frac{\dd}{\dd t} \int_\R \phi(x) u(t,x) \dd x = \int_\R \phi(x) \partial_t u(t,x) \dd x$.
On the other hand, by Itô's formula, the infinitesimal generator of the process $M_t$ from \eqref{eq:nlsde_revised} acts on $\phi(x)$ as $\mathcal{L}_t \phi(x) = \frac{1}{2}\nu_\theta(t,x,\mu_t)^2 \phi''(x)$. Thus,
\begin{align*}
    \frac{\dd}{\dd t} \E[\phi(M_t)] &= \E[\mathcal{L}_t \phi(M_t)] = \int_\R \frac{1}{2}\nu_\theta(t,x,\mu_t)^2 \phi''(x) u(t,x) \dd x.
\end{align*}
Integrating the right-hand side by parts twice and using the compact support of $\phi$ to discard boundary terms:
\begin{align*}
\int_\R \frac{1}{2}\nu_\theta(t,x,\mu_t)^2 \phi''(x) u(t,x) \dd x &= - \int_\R \partial_x\left[\frac{1}{2}\nu_\theta(t,x,\mu_t)^2 u(t,x)\right] \phi'(x) \dd x \\
&= \int_\R \frac{1}{2} \partial_{xx}\left[\nu_\theta(t,x,\mu_t)^2 u(t,x)\right] \phi(x) \dd x.
\end{align*}
Equating the two expressions for $\frac{\dd}{\dd t}\E[\phi(M_t)]$ yields:
\[ \int_\R \phi(x) \left( \partial_t u(t,x) - \frac{1}{2} \partial_{xx}\left[\nu_\theta(t,x,\mu_t)^2 u(t,x)\right] \right) \dd x = 0. \]
Since this holds for all $\phi \in C_c^\infty(\R)$, the term in the parenthesis must be zero in the sense of distributions, which gives the weak form of equation \eqref{eq:mckean_vlasov_pde_revised}.
\end{proof}

\begin{remark}[Individual Ambiguity and Collective Dynamics]
This section provides a formal link between the microscopic axioms of the NBM and the macroscopic evolution of a large system. The non-linearity in the driver $g_\theta$, which on an individual level can be interpreted as defining a non-additive measure or capacity that reflects ambiguity, manifests at the collective level as a non-linear dependence in the diffusion coefficient of the governing Fokker-Planck equation. The interaction through the law $\mu_t$ is the mean-field consequence of each particle's volatility being shaped by the ambiguous environment created by all other particles.
\end{remark}

\section{A Universal Approximation Theorem for Canonical NBMs}
\label{sec:uat}

We now show that the NBM framework is sufficiently expressive to approximate any well-behaved standard diffusion process.

\begin{theorem}[Universal Approximation Property of Canonical NBMs]
\label{thm:nbm_uat}
Let $K \subset \R$ be a compact set, and let $\nu: [0,T] \times K \to (0, \infty)$ be any continuous function that is bounded away from zero, i.e., $\inf_{(t,x) \in [0,T] \times K} \nu(t,x) > 0$.
Then, for any $\epsilon > 0$, there exists a parameter vector $\theta$ and a specialized neural driver $g_\theta(t,x,z)$ satisfying \Cref{ass:implicit_vol} on $[0,T] \times K$, such that the resulting implicit volatility function $\nu_\theta(t,x)$ satisfies:
\[
    \sup_{(t,x) \in [0,T] \times K} \abs{\nu_\theta(t,x) - \nu(t,x)} < \epsilon.
\]
\end{theorem}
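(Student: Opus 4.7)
The plan is to reduce the claim to the classical universal approximation theorem for feedforward neural networks, exploiting the observation that for an affine driver of the form $g_\theta(t,x,z) = z - \phi_\theta(t,x)$, the unique root in $z$ is simply $\nu_\theta(t,x) = \phi_\theta(t,x)$. It therefore suffices to produce a neural-network-parameterized function $\phi_\theta$ that (i) uniformly approximates $\nu$ on $[0,T]\times K$ to within $\epsilon$, and (ii) satisfies the global structural conditions of \Cref{ass:implicit_vol} (positivity, $C^1$ regularity, linear growth) on $[0,T]\times\R$.

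First, I would invoke the classical universal approximation theorem (Cybenko, Hornik) to obtain a feedforward network $\mathrm{NN}_\theta$ with a smooth, non-constant, bounded activation (e.g.\ $\tanh$) that approximates the continuous function $\nu$ to within any prescribed tolerance $\epsilon' > 0$, uniformly on the compact set $[0,T] \times K$. Such a network is $C^\infty$ and globally bounded (hence trivially of at most linear growth).

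Next, I would compose $\mathrm{NN}_\theta$ with a smooth positivity-enforcing wrapper to obtain $\phi_\theta$. Setting $\nu_0 \coloneqq \inf_{[0,T]\times K}\nu > 0$, I would use the smooth soft-maximum
\[
\phi_\theta(t,x) \coloneqq \tfrac{1}{\lambda}\log\!\left(e^{\lambda \nu_0/2} + e^{\lambda\, \mathrm{NN}_\theta(t,x)}\right)
\]
for a sufficiently large $\lambda > 0$. One checks directly that $\phi_\theta \ge \nu_0/2 > 0$ globally, is $C^\infty$, inherits the growth of $\mathrm{NN}_\theta$ (since $\log(e^a + e^b) \le \max(a,b) + \log 2$), and satisfies $|\phi_\theta(t,x) - \max(\nu_0/2, \mathrm{NN}_\theta(t,x))| \le \log(2)/\lambda$ pointwise. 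Because $\nu \ge \nu_0$ on $K$, choosing $\epsilon' < \nu_0/2$ ensures $\mathrm{NN}_\theta(t,x) \ge \nu_0/2$ there, so the soft-maximum coincides with $\mathrm{NN}_\theta$ on $K$ up to the error $\log(2)/\lambda$. Picking $\lambda$ large and $\epsilon'$ small then yields $\sup_{[0,T]\times K}|\phi_\theta - \nu| < \epsilon$.

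Finally, I would define $g_\theta(t,x,z) \coloneqq z - \phi_\theta(t,x)$, arising from the general driver $f_\theta(t,x,y,z) \coloneqq z - \phi_\theta(t,x)$, and verify the hypotheses. The affine map $z \mapsto g_\theta(t,x,z)$ admits the unique positive root $\nu_\theta(t,x) = \phi_\theta(t,x)$, with $\partial_z g_\theta \equiv 1 \neq 0$, so parts (a), (b) of \Cref{ass:implicit_vol} are immediate and (c) is inherited from the linear growth of $\phi_\theta$. Since $f_\theta$ is independent of $y$, the monotonicity condition in \Cref{ass:wellposedness_multidim}(ii) holds trivially with $\mu = 0$, and the affine dependence on $z$ is well within the quadratic-growth bound (iii). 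The main (mild) obstacle is reconciling the \emph{local} uniform approximation guaranteed by the classical UAT with the \emph{global} structural conditions on $\nu_\theta$; this is precisely what the soft-maximum wrapper resolves, at the cost of a small additional error that is absorbed by tightening $\epsilon'$ and $\lambda$.
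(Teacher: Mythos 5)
Your proof is correct, and it takes a genuinely different and more elementary route than the paper's. The paper approximates the three-variable \emph{driver} $h(t,x,z)\coloneqq z-\nu(t,x)$ and its $z$-derivative simultaneously, invoking the $C^1$-universal approximation theorem of Hornik--Stinchcombe--White so that $\partial_z g_\theta$ stays near $1$ on a compact tube around the graph of $\nu$, and then uses sign changes of $g_\theta$ at $z=\nu(t,x)\pm\delta$ together with the Intermediate Value Theorem to locate and isolate the root. You instead approximate the two-variable \emph{target volatility} $\nu(t,x)$ directly with a standard (function-level) UAT, wrap it in a smooth soft-max to enforce global positivity and growth, and then build the driver $g_\theta(t,x,z)=z-\phi_\theta(t,x)$ around it so that the root is $\phi_\theta$ \emph{by construction}---no derivative approximation, no IVT, no root-localization argument. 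This buys a significantly shorter and more transparent proof requiring only the classical UAT; the regularity and uniqueness of the root (Assumption~\ref{ass:implicit_vol}(a),(b)) are immediate since $\partial_z g_\theta\equiv 1$, and (c) is handled by your wrapper (indeed more strongly than the paper, which only appeals to boundedness on the compact set). You also correctly exhibit an underlying general driver $f_\theta(t,x,y,z)=z-\phi_\theta(t,x)$ that is monotone (constant) in $y$ and of sub-quadratic growth in $z$, so \Cref{ass:wellposedness_multidim} holds; the paper's proof leaves this verification implicit. The one concession your route makes is that the resulting $g_\theta$ is affine in $z$ rather than a generic network of $(t,x,z)$, but the theorem asks only for existence of \emph{some} admissible driver, so this is no loss.
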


\begin{proof}
The proof is constructive. We will define an ideal target function whose root is the target volatility $\nu$. We will then use the universal approximation theorem for neural networks and their derivatives to construct a driver $g_\theta$ that is uniformly close to this ideal function. Finally, we show that the root of $g_\theta$ must be close to the root of the ideal function.

\textbf{Step 1: Define the Target Function and Compact Domain.}

Let the target volatility function be $\nu(t,x)$. Since $\nu$ is a continuous function on the compact set $[0,T] \times K$, it is bounded and attains its bounds. Let these be:
\[
c_\nu \coloneqq \inf_{(t,x) \in [0,T] \times K} \nu(t,x) \quad \text{and} \quad
C_\nu \coloneqq \sup_{(t,x) \in [0,T] \times K} \nu(t,x).
\]
By hypothesis, $0 < c_\nu \le C_\nu < \infty$.

We define an ideal driver function $h: [0,T] \times K \times \R \to \R$ whose root with respect to its third argument is precisely $\nu(t,x)$:
\[
h(t,x,z) \coloneqq z - \nu(t,x).
\]
The unique root of $h(t,x,z)=0$ for $z$ is $z=\nu(t,x)$. Furthermore, the partial derivative with respect to $z$ is $\partial_z h(t,x,z) = 1$, which is constant and non-zero.

Let $\epsilon > 0$ be the desired final approximation tolerance. We choose a parameter $\delta$ satisfying $0 < \delta < \min(\epsilon, c_\nu/2, 1/2)$. The rationale for this choice will become clear below. We define the compact domain $\mathcal{D} \subset [0,T] \times K \times \R$ on which our approximation will be performed:
\[
\mathcal{D} \coloneqq [0,T] \times K \times [c_\nu - \delta, C_\nu + \delta].
\]
Since $c_\nu - \delta > c_\nu - c_\nu/2 = c_\nu/2 > 0$, the $z$ component of this domain remains strictly positive.

\textbf{Step 2: Apply the $C^1$-Universal Approximation Theorem.}

The function $h(t,x,z)$ is of class $C^\infty$. We invoke the $C^1$-universal approximation theorem for neural networks (e.g., \cite{HornikStinchcombeWhite1990}), which states that a feedforward neural network with a single hidden layer and appropriate activation functions can approximate any $C^1$ function and its first derivatives uniformly on any compact set.
Therefore, for the compact domain $\mathcal{D}$ and the precision parameter $\delta$ defined above, there exists a parameter vector $\theta$ and a specialized neural driver $g_\theta(t,x,z)$ such that the following holds for all $(t,x,z) \in \mathcal{D}$:
\begin{align}
\abs{g_\theta(t,x,z) - h(t,x,z)} &< \delta \label{eq:uat_func_approx} \\
\abs{\partial_z g_\theta(t,x,z) - \partial_z h(t,x,z)} &< \delta \label{eq:uat_deriv_approx}
\end{align}
Substituting the definitions of $h$ and $\partial_z h$ gives:
\begin{align}
\abs{g_\theta(t,x,z) - (z - \nu(t,x))} &< \delta \label{eq:uat_func_explicit} \\
\abs{\partial_z g_\theta(t,x,z) - 1} &< \delta \label{eq:uat_deriv_explicit}
\end{align}

\textbf{Step 3: Verify Existence and Uniqueness of the Implicit Volatility Root.}

We now show that for each fixed $(t,x) \in [0,T] \times K$, the equation $g_\theta(t,x,z)=0$ has a unique root $z = \nu_\theta(t,x)$, and that this root lies within the interval $[c_\nu - \delta, C_\nu + \delta]$. This will verify conditions (a) and (b) of \Cref{ass:implicit_vol} on this domain.

First, from \eqref{eq:uat_deriv_explicit} and our choice of $\delta < 1/2$, we have:
\[
\partial_z g_\theta(t,x,z) > 1 - \delta > 1 - 1/2 = 1/2 > 0 \quad \text{for all } (t,x,z) \in \mathcal{D}.
\]
This implies that for any fixed $(t,x)$, the function $z \mapsto g_\theta(t,x,z)$ is strictly increasing on the interval $[c_\nu - \delta, C_\nu + \delta]$. A strictly monotonic function can have at most one root.

To prove existence, we evaluate $g_\theta$ at the boundaries of the interval $[\nu(t,x) - \delta, \nu(t,x) + \delta]$. Note that this interval is contained in $[c_\nu - \delta, C_\nu + \delta]$, so the approximation bounds hold.
\begin{itemize}
    \item At $z = \nu(t,x) - \delta$:
    The target function is $h(t,x, \nu(t,x) - \delta) = (\nu(t,x)-\delta) - \nu(t,x) = -\delta$.
    Using the approximation bound \eqref{eq:uat_func_approx}, we have $\abs{g_\theta(t,x, \nu(t,x)-\delta) - (-\delta)} < \delta$. This implies $-\delta < g_\theta(t,x, \nu(t,x)-\delta) + \delta < \delta$, which in turn implies $g_\theta(t,x, \nu(t,x)-\delta) < 0$.

    \item At $z = \nu(t,x) + \delta$:
    The target function is $h(t,x, \nu(t,x) + \delta) = (\nu(t,x)+\delta) - \nu(t,x) = \delta$.
    Using the approximation bound, we have $\abs{g_\theta(t,x, \nu(t,x)+\delta) - \delta} < \delta$. This implies $-\delta < g_\theta(t,x, \nu(t,x)+\delta) - \delta < \delta$, which in turn implies $g_\theta(t,x, \nu(t,x)+\delta) > 0$.
\end{itemize}
Since $z \mapsto g_\theta(t,x,z)$ is continuous and changes sign from negative to positive over the interval $(\nu(t,x) - \delta, \nu(t,x) + \delta)$, the Intermediate Value Theorem guarantees the existence of a root $\nu_\theta(t,x)$ within this interval. As the function is strictly monotonic on $\mathcal{D}$, this root is unique.

\textbf{Step 4: Establish the Uniform Error Bound for the Root.}

We have established that for each $(t,x)$, there is a unique root $\nu_\theta(t,x)$ such that $g_\theta(t,x, \nu_\theta(t,x)) = 0$.
From Step 3, we know that this root lies in the interval $(\nu(t,x)-\delta, \nu(t,x)+\delta)$, which immediately implies $\abs{\nu_\theta(t,x) - \nu(t,x)} < \delta$.

Since this holds for any $(t,x) \in [0,T] \times K$, we have shown that $\sup_{(t,x) \in [0,T] \times K} \abs{\nu_\theta(t,x) - \nu(t,x)} < \delta$.
By our initial choice of $\delta < \epsilon$, we have achieved the desired approximation accuracy:
\[
\sup_{(t,x) \in [0,T] \times K} \abs{\nu_\theta(t,x) - \nu(t,x)} < \epsilon.
\]
The constructed driver $g_\theta$ and its implicit volatility function $\nu_\theta$ satisfy the conditions of \Cref{ass:implicit_vol} on $[0,T] \times K$. Specifically, (a) unique root and (b) regularity are shown in Step 3, and (c) linear growth is trivially satisfied as $\nu_\theta$ is a continuous function on a compact set, hence bounded. This completes the proof.
\end{proof}

\begin{remark}[Implications of the Theorem]
This theorem establishes that the implicit formulation via the driver $g_\theta$ is a powerful re-parameterization rather than a restriction. It provides a theoretical guarantee that searching over the parameters $\theta$ of a suitably structured driver is effectively searching over a dense subset of all continuous diffusion models on compact domains.
\end{remark}

\section{A Volatility Selection Principle}
\label{sec:selection_principle}

The theory developed thus far has relied on the strong structural condition of \Cref{ass:implicit_vol}, which posits that the algebraic equation $g_\theta(t,x,z) = 0$ admits a unique positive root for the volatility $z$. While this assumption defines a tractable and non-empty class of models, it is restrictive. A neural driver $g_\theta$ learned from data, without architectural constraints enforcing monotonicity, may not be monotonic in $z$. For instance, $g_\theta$ could be a polynomial in $z$, leading to multiple positive roots.

This raises a fundamental question: if multiple volatility levels are consistent with the canonical NBM condition $g_\theta(t,M_t,\sigma_t)=0$, which volatility does the process select? The representation theorem (\Cref{thm:nbm_representation}) breaks down, as the function $\nu_\theta$ is no longer well-defined. To address this, we propose a variational selection principle, inspired by concepts in risk-sensitive control and physics, where the system endogenously selects the volatility that minimizes an associated potential function.

\begin{assumption}[Regular Multiple Roots]
\label{ass:multiple_roots}
For a given $\theta$, the specialized driver $g_\theta: [0,T] \times \R \times \R \to \R$ is of class $C^1$. For each $(t,x) \in [0,T] \times \R$, the set of positive roots
\[ \mathcal{Z}_\theta(t,x) \coloneqq \{ z \in (0, \infty) \mid g_\theta(t,x,z)=0 \} \]
is non-empty, finite, and contains only regular roots (i.e., $\partial_z g_\theta(t,x,z) \neq 0$ for all $z \in \mathcal{Z}_\theta(t,x)$).
\end{assumption}

To resolve the ambiguity of multiple roots, we introduce a potential function derived from the driver itself.

\begin{definition}[Volatility Potential]
\label{def:vol_potential}
The \textbf{volatility potential} associated with the specialized driver $g_\theta$ is the function $G_\theta: [0,T] \times \R \times \R \to \R$ defined by the integral:
\[ G_\theta(t,x,z) \coloneqq \int_0^z g_\theta(t,x,u) \dd u. \]
\end{definition}

Our proposed selection principle states that the system will adopt the volatility that corresponds to the lowest potential energy among all possible choices.

\begin{definition}[Selected Volatility via Minimal Potential]
\label{def:selected_vol}
The \textbf{selected volatility function} $\nu_\theta: [0,T] \times \R \to (0, \infty)$ is defined by the \textbf{minimal potential principle}:
\[ \nu_\theta(t,x) \coloneqq \underset{z \in \mathcal{Z}_\theta(t,x)}{\arg\min} \, G_\theta(t,x,z). \]
\end{definition}

For this definition to yield a well-behaved SDE, the resulting function $\nu_\theta(t,x)$ must be well-defined and sufficiently regular. The following assumption provides the precise conditions needed to guarantee this. It replaces the previous, less formal \Cref{ass:strict_minimizer}.

\begin{assumption}[Strict Minimal Potential and Branch Separation]
\label{ass:strict_minimizer}
The specialized driver $g_\theta$ and its associated potential $G_\theta$ satisfy the following for all $(t,x) \in [0,T] \times \R$:
\begin{enumerate}[label=(\roman*)]
    \item \textbf{Unique Global Minimizer:} The minimizer in \Cref{def:selected_vol} is unique. Let this be $\nu_\theta(t,x)$.
    \item \textbf{Strict Second-Order Condition:} The selected root corresponds to a strict local minimum of the potential, meaning the second-order condition holds strictly:
    \[ \frac{\partial^2 G_\theta}{\partial z^2}(t,x,\nu_\theta(t,x)) = \frac{\partial g_\theta}{\partial z}(t,x,\nu_\theta(t,x)) > 0. \]
    \item \textbf{Branch Separation:} The value of the potential at the selected root is strictly lower than at any other root:
    \[ G_\theta(t,x, \nu_\theta(t,x)) < G_\theta(t,x, z) \quad \forall z \in \mathcal{Z}_\theta(t,x), \, z \neq \nu_\theta(t,x). \]
    \item \textbf{Global Linear Growth:} The resulting selected volatility function $\nu_\theta(t,x)$ satisfies a linear growth condition: there exists a constant $C>0$ such that $|\nu_\theta(t,x)| \le C(1+|x|)$.
\end{enumerate}
\end{assumption}

Under this refined set of assumptions, we can provide a complete proof of the representation theorem.

\begin{theorem}[Representation with a Volatility Selection Principle]
\label{thm:nbm_representation_selection}
Let the driver $f_\theta$ satisfy the BSDE well-posedness conditions (\Cref{ass:wellposedness_multidim}), and let its specialization $g_\theta$ satisfy the multiple root conditions of \Cref{ass:multiple_roots} and the strict minimal potential conditions of \Cref{ass:strict_minimizer}.

Then, a canonical Neural-Brownian Motion consistent with the minimal potential principle exists and is the unique strong solution to the stochastic differential equation:
\[
    \dd M_t = \nu_\theta(t, M_t) \dd W_t, \quad M_0 = 0,
\]
where $\nu_\theta(t,x)$ is the selected volatility function defined in \Cref{def:selected_vol}.
\end{theorem}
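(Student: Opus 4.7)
The plan is to establish enough regularity of the selected volatility function $\nu_\theta$ to apply classical strong-solution theory, and then to verify the three axioms of \Cref{def:nbm_axioms} directly via the drift characterization \Cref{prop:drift_characterization_revised}. The argument splits naturally into a regularity stage, an existence/uniqueness stage for the SDE, and a verification stage.

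First, I would prove that $\nu_\theta$ is locally $C^1$ (hence locally Lipschitz) in $x$ and continuous in $t$. Fix $(t_0, x_0)$ and enumerate the finite set $\mathcal{Z}_\theta(t_0,x_0) = \{z_0, z_1, \dots, z_n\}$ with $z_0 = \nu_\theta(t_0, x_0)$. By \Cref{ass:multiple_roots} each $z_i$ is a regular root, and by \Cref{ass:strict_minimizer}(ii) the selected root satisfies $\partial_z g_\theta(t_0, x_0, z_0) > 0$. The Implicit Function Theorem then produces $C^1$ local branches $\zeta_i(t,x)$ on a common neighborhood $U$ of $(t_0,x_0)$ with $\zeta_i(t_0,x_0) = z_i$ and $g_\theta(t,x,\zeta_i(t,x)) = 0$. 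Moreover, because bifurcations would require $\partial_z g_\theta = 0$ at some root, \Cref{ass:multiple_roots} prevents new positive roots from appearing on a small enough $U$, so $\mathcal{Z}_\theta(t,x) = \{\zeta_0(t,x),\dots,\zeta_n(t,x)\}$ there. The map $(t,x) \mapsto G_\theta(t,x,\zeta_i(t,x))$ is continuous for each $i$, and the strict branch separation \Cref{ass:strict_minimizer}(iii) gives $G_\theta(t_0,x_0,\zeta_0(t_0,x_0)) < G_\theta(t_0,x_0,\zeta_i(t_0,x_0))$ for $i \ge 1$. Strict inequalities are preserved under continuous perturbation, so shrinking $U$ we conclude $\nu_\theta \equiv \zeta_0$ on $U$, and therefore $\nu_\theta$ is $C^1$ at $(t_0, x_0)$.

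Second, combining this local Lipschitz property with the global linear growth bound of \Cref{ass:strict_minimizer}(iv), the SDE $\dd M_t = \nu_\theta(t,M_t)\dd W_t$, $M_0=0$, admits a unique non-explosive strong solution $M$ with continuous paths by classical results (Karatzas--Shreve, Thm.~5.2.9), exactly as invoked in the preamble of the proof of \Cref{thm:nbm_representation}. Third, I would verify the canonical NBM axioms: $M_0=0$ and path continuity are immediate; for the $\cEneutheta$-martingale property, the Itô decomposition gives $b_t = 0$ and $\sigma_t = \nu_\theta(t,M_t)$, and since the selected volatility is by \Cref{def:selected_vol} a root of $g_\theta(t,x,\cdot)$, we have $g_\theta(t,M_t,\sigma_t) = 0 = -b_t$, which is precisely the algebraic identity required by \Cref{prop:drift_characterization_revised}. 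The canonical property $b_t=0$ under $\Prob$ is read off the SDE. Uniqueness among canonical NBMs compatible with the selection principle follows because any such process has volatility $\nu_\theta(t,\tilde M_t)$ and zero drift, hence solves the same SDE, and pathwise uniqueness forces indistinguishability.

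The main technical obstacle is the regularity stage: ensuring that the selected branch does not jump as $(t,x)$ varies and that $\mathcal{Z}_\theta(t,x)$ has locally constant cardinality. Assumptions \ref{ass:multiple_roots} and \ref{ass:strict_minimizer} are designed to rule out both failures — the regular-root hypothesis forbids bifurcations, and the strict branch separation together with continuity of each branch prevents two branches from tying for the minimum on any open set. Once this is in place the remainder of the argument is a direct re-run of the proof of \Cref{thm:nbm_representation}, the only structural change being that $\nu_\theta$ is now defined by a variational selection rather than by a unique-root hypothesis.
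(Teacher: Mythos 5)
Your proposal is correct and follows essentially the same route as the paper: Implicit Function Theorem at the selected root together with the strict branch-separation condition to establish local $C^1$ regularity of $\nu_\theta$, classical strong-solution theory for the SDE under local Lipschitz plus linear growth, and verification of the NBM axioms via \Cref{prop:drift_characterization_revised}. The only cosmetic difference is that you apply the IFT at every root in $\mathcal{Z}_\theta(t_0,x_0)$ to make explicit that the root set has locally constant cardinality, a step the paper leaves as an informal assertion that the roots and their potential values ``vary continuously'' with $(t,x)$.
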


\begin{proof}
The proof consists of three main parts. First, and most critically, we establish the regularity of the selected volatility function $\nu_\theta(t,x)$. Second, we use this regularity to prove the well-posedness of the corresponding SDE. Finally, we demonstrate the equivalence between solutions of this SDE and the definition of a canonical NBM under the selection principle.

\textbf{Step 1: Regularity of the Selected Volatility Function $\nu_\theta(t,x)$.}
The main challenge is to show that $\nu_\theta(t,x)$, defined via an $\arg\min$ operation, is a regular function. We will prove that $\nu_\theta$ is continuously differentiable ($C^1$), which implies it is locally Lipschitz.

Let an arbitrary point $(t_0, x_0) \in [0,T] \times \R$ be given. Let $z_0 \coloneqq \nu_\theta(t_0, x_0)$. By the definition of $\nu_\theta$ and the assumptions, we know two facts about the point $(t_0, x_0, z_0)$:
\begin{enumerate}
    \item $z_0$ is a root of $g_\theta$: $g_\theta(t_0, x_0, z_0) = 0$.
    \item $z_0$ satisfies the strict second-order condition from \Cref{ass:strict_minimizer}(ii): $\partial_z g_\theta(t_0, x_0, z_0) > 0$.
\end{enumerate}
These are precisely the conditions required to apply the Implicit Function Theorem. Let us define the function $F: [0,T] \times \R \times (0,\infty) \to \R$ by $F(t,x,z) \coloneqq g_\theta(t,x,z)$. We have shown that $F(t_0, x_0, z_0) = 0$ and its partial derivative with respect to its third variable, $\partial_z F(t_0, x_0, z_0)$, is non-zero.

By the Implicit Function Theorem, there exist an open neighborhood $U$ of $(t_0, x_0)$ and a unique continuously differentiable function $\psi: U \to (0,\infty)$ such that:
\begin{itemize}
    \item $\psi(t_0, x_0) = z_0$.
    \item $g_\theta(t, x, \psi(t,x)) = 0$ for all $(t,x) \in U$.
\end{itemize}
This function $\psi(t,x)$ represents a local, smooth branch of roots of $g_\theta$. We must now show that our globally defined function $\nu_\theta(t,x)$ coincides with this smooth local function $\psi(t,x)$ on the neighborhood $U$.

By \Cref{ass:strict_minimizer}(iii), the separation condition holds at $(t_0, x_0)$: $G_\theta(t_0, x_0, z_0) < G_\theta(t_0, x_0, z')$ for all other roots $z' \in \mathcal{Z}_\theta(t_0, x_0)$. Since $g_\theta$ is $C^1$, the potential $G_\theta$ is $C^2$. Furthermore, the set of roots $\mathcal{Z}_\theta(t,x)$ is finite. The locations of the roots and the values of $G_\theta$ at these roots vary continuously with $(t,x)$. Therefore, due to the strict inequality at $(t_0, x_0)$, there exists a (possibly smaller) neighborhood $V \subseteq U$ of $(t_0, x_0)$ such that for all $(t,x) \in V$, the root corresponding to the branch $\psi(t,x)$ remains the unique global minimizer of the potential $G_\theta(t,x, \cdot)$ over the set of roots $\mathcal{Z}_\theta(t,x)$.

This means that for all $(t,x) \in V$, we have $\nu_\theta(t,x) = \psi(t,x)$. Since $\psi$ is $C^1$ on $V$, $\nu_\theta$ is also $C^1$ on $V$. As the point $(t_0, x_0)$ was arbitrary, we conclude that the selected volatility function $\nu_\theta(t,x)$ is of class $C^1$ on its entire domain $[0,T] \times \R$. A $C^1$ function on a domain in $\R^n$ is necessarily locally Lipschitz.

\textbf{Step 2: Well-Posedness of the SDE.}
We have established that the function $\nu_\theta(t,x)$ is locally Lipschitz in its arguments $(t,x)$, and therefore also in $x$ for fixed $t$. Additionally, \Cref{ass:strict_minimizer}(iv) provides the global linear growth condition: $|\nu_\theta(t,x)| \le C(1+|x|)$.
A standard result in SDE theory (e.g., \cite{karatzas2012brownian}) states that an SDE of the form $\dd M_t = \nu(t, M_t) \dd W_t$ with initial condition $M_0=0$ has a unique strong solution, provided the coefficient function $\nu(t,x)$ is locally Lipschitz in $x$ and satisfies a linear growth condition. Our function $\nu_\theta(t,x)$ meets these requirements. Therefore, the SDE $\dd M_t = \nu_\theta(t, M_t) \dd W_t$ is well-posed and admits a unique, pathwise continuous strong solution.

\textbf{Step 3: Equivalence to Canonical NBM with Selection Principle.}
We now show this unique SDE solution is precisely the canonical NBM we seek.
\begin{itemize}
    \item[\textbf{($\impliedby$) Sufficiency:}] Let $(M_t)_{t \ge 0}$ be the unique strong solution to the SDE. We must verify that it is a canonical NBM satisfying the minimal potential principle.
    \begin{enumerate}
        \item By the SDE definition, $M_0=0$ and the paths are continuous.
        \item The SDE has no drift term, so the drift of $M_t$ under the physical measure $\Prob$ is $b_t=0$. This satisfies the canonical property.
        \item The volatility of the process is $\sigma_t = \nu_\theta(t, M_t)$. By the very definition of the function $\nu_\theta$ (\Cref{def:selected_vol}), its value is a root of $g_\theta$. Thus, $g_\theta(t, M_t, \sigma_t) = g_\theta(t, M_t, \nu_\theta(t, M_t)) = 0$. By \Cref{prop:drift_characterization_revised}, the drift $b_t$ must satisfy $b_t = -g_\theta(t, M_t, \sigma_t)$. This becomes $0 = -0$, which is true. Therefore, $M_t$ satisfies the $\cEneutheta$-martingale property.
    \end{enumerate}
    Thus, $M_t$ is a canonical NBM. By construction, its volatility follows the minimal potential principle.

    \item[\textbf{($\implies$) Necessity:}] Let $(M_t)_{t \ge 0}$ be a process that is, by hypothesis, a canonical NBM whose volatility is determined by the minimal potential principle.
    \begin{enumerate}
        \item As a canonical NBM, its Itô decomposition $\dd M_t = b_t \dd t + \sigma_t \dd W_t$ must have $b_t = 0$.
        \item As an $\cEneutheta$-martingale, its coefficients must satisfy $b_t = -g_\theta(t, M_t, \sigma_t)$. Combining these gives the constraint $g_\theta(t, M_t, \sigma_t) = 0$. So its volatility $\sigma_t$ must be one of the roots in $\mathcal{Z}_\theta(t, M_t)$.
        \item The additional hypothesis is that the volatility is selected by the minimal potential principle. This means that among all possible roots, the process must adopt the one that minimizes the potential $G_\theta$. By definition, this is precisely $\nu_\theta(t, M_t)$. Therefore, we must have $\sigma_t = \nu_\theta(t, M_t)$.
    \end{enumerate}
    Substituting these derived coefficients ($b_t=0, \sigma_t=\nu_\theta(t, M_t)$) back into the general Itô decomposition, we find that the process $M_t$ must satisfy the SDE $\dd M_t = \nu_\theta(t, M_t) \dd W_t$.
\end{itemize}
Since the SDE has a unique solution, the canonical NBM satisfying the selection principle is uniquely defined and given by this solution. This completes the proof.
\end{proof}

\section{Application: A Consistent Implicit Volatility Model for Option Pricing}
\label{sec:application}

We now construct a novel Implicit Volatility Model for option pricing that is a direct and rigorous application of the Neural-Brownian Motion theory. We postulate that the fundamental martingale in the risk-neutral world, the discounted asset price, is a canonical NBM.

\subsection{The Neural Implicit Volatility Model}

We work under a risk-neutral probability measure $\Qbb$, where the risk-free rate is $r$. The fundamental theorem of asset pricing states that the discounted price of any non-dividend-paying asset is a $\Qbb$-martingale. Our central modeling postulate is that this martingale is a canonical NBM.

\begin{axiom}[Postulate of the Neural Implicit Volatility Model]
\label{ax:niv_model}
Let $S_t$ be the price of a financial asset. Its discounted price process, $M_t \coloneqq e^{-rt}S_t$, is a one-dimensional canonical Neural-Brownian Motion under the risk-neutral measure $\Qbb$.
\end{axiom}

This single axiom, combined with our core theory, fully specifies the risk-neutral dynamics. Since $M_t$ is a canonical NBM under $\Qbb$, by \Cref{thm:nbm_representation} it must be the unique solution to an SDE of the form $\dd M_t = \nu_\theta(t, M_t) \dd W_t^\Qbb$, where $W_t^\Qbb$ is a $\Qbb$-Brownian motion and the volatility function $\nu_\theta$ is implicitly defined by the constraint $g_\theta(t, M_t, \nu_\theta(t, M_t)) = 0$ for a specialized neural driver $g_\theta$.

From this, we can derive the dynamics of the asset price $S_t = e^{rt}M_t$ itself. Using Itô's product rule:
\begin{align*}
    \dd S_t &= \dd(e^{rt}M_t) = (r e^{rt} \dd t) M_t + e^{rt} \dd M_t \\
    &= r (e^{rt} M_t) \dd t + e^{rt} \left( \nu_\theta(t, M_t) \dd W_t^\Qbb \right) \\
    &= r S_t \dd t + e^{rt} \nu_\theta(t, e^{-rt}S_t) \dd W_t^\Qbb.
\end{align*}
This gives the SDE for the asset price under $\Qbb$:
\begin{equation}
\label{eq:neural_gbm}
    \dd S_t = r S_t \dd t + \sigma_\theta(t, S_t) S_t \dd W_t^\Qbb,
\end{equation}
where the standard percentage volatility function (implied volatility) $\sigma_\theta(t,S)$ is defined as:
\begin{equation}
\label{eq:implicit_vol_finance}
    \sigma_\theta(t, S) \coloneqq \frac{e^{rt}\nu_\theta(t, e^{-rt}S)}{S}.
\end{equation}
Here, $\nu_\theta(t,m)$ is the function giving the root of $g_\theta(t,m,z)=0$. This formulation is now fully consistent with the theory of canonical NBMs. The model learns a relationship $g_\theta$ for the fundamental martingale $M_t$, which in turn implies a specific, derived local volatility structure for the asset price $S_t$.

\subsection{The Pricing Partial Differential Equation}
Given this model, the price of any European-style derivative must satisfy a corresponding PDE.

\begin{theorem}[The Neural Black-Scholes PDE]
\label{thm:neural_bs_pde}
Under \Cref{ax:niv_model}, the price $C(t,S)$ of a European derivative with payoff $H(S_T)$ satisfies the linear PDE:
\begin{equation}
\label{eq:neural_bs_pde_main}
    \frac{\partial C}{\partial t} + rS \frac{\partial C}{\partial S} + \frac{1}{2}(\sigma_\theta(t, S)S)^2 \frac{\partial^2 C}{\partial S^2} - rC = 0,
\end{equation}
for $(t,S) \in [0,T) \times \R^+$, subject to the terminal condition $C(T,S) = H(S)$. The function $\sigma_\theta(t,S)$ is the implicit volatility from \Cref{eq:implicit_vol_finance}.
\end{theorem}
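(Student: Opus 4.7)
The plan is to proceed by a standard Feynman–Kac / no-arbitrage argument, treating \Cref{eq:neural_gbm} as a generalized local volatility SDE under $\Qbb$ and exploiting the fact that the implicit volatility $\nu_\theta$ supplied by \Cref{thm:nbm_representation} already satisfies the regularity needed for classical stochastic calculus. First I would invoke risk-neutral pricing to write the derivative price as the conditional expectation $C(t,S) = \E^{\Qbb}[e^{-r(T-t)}H(S_T)\mid S_t = S]$, so that the discounted price process $\tilde C_t \coloneqq e^{-rt}C(t,S_t)$ is a $\Qbb$-martingale by the tower property. The terminal condition $C(T,S)=H(S)$ is then immediate from the definition of the conditional expectation at $t=T$.

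Next I would establish the regularity framework justifying the application of Itô's formula. Under \Cref{ass:implicit_vol}, the function $\nu_\theta(t,m)$ is $C^1$ and of linear growth, so via \Cref{eq:implicit_vol_finance} the coefficient $\sigma_\theta(t,S)S = e^{rt}\nu_\theta(t,e^{-rt}S)$ inherits local Lipschitz continuity and linear growth in $S$ on $\R^+$. This guarantees both a unique strong solution to \eqref{eq:neural_gbm} and, under mild additional assumptions on $H$ (e.g., polynomial growth and appropriate measurability), that $C(t,S)$ admits a $C^{1,2}([0,T)\times \R^+)$ representative via the Feynman–Kac theorem.

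The core computation is then to apply Itô's formula to $\tilde C_t = e^{-rt}C(t,S_t)$ using the dynamics \eqref{eq:neural_gbm}. This yields
\begin{equation*}
\dd \tilde C_t = e^{-rt}\!\left(-rC + \partial_t C + rS_t\,\partial_S C + \tfrac{1}{2}(\sigma_\theta(t,S_t)S_t)^2 \partial_{SS}C\right)\!\dd t + e^{-rt}\sigma_\theta(t,S_t)S_t\,\partial_S C\,\dd W_t^{\Qbb}.
\end{equation*}
Because $\tilde C_t$ is a $\Qbb$-martingale and the drift of a continuous semimartingale is unique by Doob–Meyer, the bracketed drift term must vanish identically on $[0,T)\times\R^+$, producing \eqref{eq:neural_bs_pde_main}. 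Combined with $C(T,S)=H(S)$, this completes the derivation.

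The main obstacle I anticipate is the regularity step rather than the Itô computation itself: one must verify that the implicitly defined $\sigma_\theta(t,S)$ is sufficiently smooth (and that $\sigma_\theta(t,S)S$ is bounded away from zero on compact subsets of $\R^+$, so that the pricing operator is uniformly parabolic) for a classical $C^{1,2}$ solution to exist. If classical regularity fails, one would fall back to interpreting $C$ as the unique viscosity solution of \eqref{eq:neural_bs_pde_main}, using the comparison principle for degenerate parabolic equations together with the stochastic representation to identify it with the conditional expectation. Either route yields the stated PDE; only the sense in which it is satisfied depends on the regularity of the learned driver $g_\theta$.
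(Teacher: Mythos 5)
Your proof is correct and follows essentially the same route as the paper's: invoke the risk-neutral pricing formula, observe that the discounted price $\tilde C_t = e^{-rt}C(t,S_t)$ is a $\Qbb$-martingale, apply Itô's formula using the dynamics of \eqref{eq:neural_gbm}, and set the drift to zero. The added discussion of regularity of $\sigma_\theta$ and the viscosity-solution fallback is a sensible elaboration of a point the paper treats implicitly, but it does not change the underlying argument.
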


\begin{proof}
By the Feynman-Kac formula, the price of the derivative at time $t$ is given by the risk-neutral expectation $C(t,S_t) = \E_\Qbb[e^{-r(T-t)}H(S_T) | \Fcal_t]$. The PDE \eqref{eq:neural_bs_pde_main} is the backward Kolmogorov equation associated with the SDE \eqref{eq:neural_gbm}. Alternatively, a no-arbitrage argument shows that the discounted price process $\tilde{C}_t = e^{-rt}C(t,S_t)$ must be a $\Qbb$-martingale. Applying Itô's formula to $\tilde{C}_t$ with the dynamics from \eqref{eq:neural_gbm}, the drift of $\tilde{C}_t$ is found to be $e^{-rt}$ times the left-hand side of \Cref{eq:neural_bs_pde_main}. Setting this drift to zero for the martingale property to hold yields the desired PDE.
\end{proof}

\subsection{Calibration: Learning the Market's Risk-Neutral View}
The model's parameters $\theta$ are learned by calibrating to a set of observed market prices $\{C^{\text{mkt}}_i\}$ for options with strikes $\{K_i\}$ and maturities $\{T_i\}$. This involves solving the optimization problem:
\begin{equation}
\label{eq:calibration_loss}
    \theta^* = \arg\min_{\theta \in \Theta} \sum_i w_i \left( C(t_0, S_0; K_i, T_i; \theta) - C^{\text{mkt}}_i \right)^2,
\end{equation}
where $C(\cdot; \theta)$ is the price obtained by solving the PDE \eqref{eq:neural_bs_pde_main} for a given parameter vector $\theta$. This process allows for the data-driven discovery of the specialized driver $g_{\theta^*}$ that best represents the market's consensus risk-neutral view, providing a principled and flexible method for reverse-engineering the dynamics implied by derivative prices. We note that this optimization problem is computationally demanding, as each evaluation of the objective function for a given $\theta$ requires solving the PDE \eqref{eq:neural_bs_pde_main} to price the entire set of calibration instruments.

\section{Conclusion}
This paper has introduced the Neural-Brownian Motion, a canonical stochastic process for worlds of learned ambiguity, built upon the axiomatic foundation of non-linear expectation. We have established its existence and uniqueness for a tractable class of models via a representation theorem, connecting abstract axioms to a concrete SDE whose volatility is implicitly defined by a neural network. The development of its stochastic calculus, including the Girsanov-type interpretation, provides essential tools for analysis. 

The successful universal approximation theorem guarantees the framework's expressiveness, while the formal analysis of its mean-field limit opens a clear and rigorous path toward understanding the collective behavior of systems governed by learned ambiguity. The application to finance demonstrates that the NBM is not merely a theoretical curiosity but a powerful and consistent modeling paradigm. Key avenues for future research include a full proof of the regularity conditions required for the mean-field analysis and a deeper exploration of the volatility selection principle for more general drivers.

\bibliographystyle{imsart-nameyear} 
\bibliography{reference}

\end{document}